\newtheorem{theorem}{Theorem}[section]
\newtheorem{lemma}[theorem]{Lemma}
\newtheorem{corollary}[theorem]{Corollary}
\newtheorem{proposition}[theorem]{Proposition}
\theoremstyle{definition} 
\newtheorem{definition}[theorem]{Definition}
\numberwithin{equation}{section}
\newcommand {\N}{\mathbb{N}} 
\newcommand {\Z}{\mathbb{Z}} 
\newcommand{\GL}{\mbox{\rm{GL}}}
\newcommand{\BS}{\mbox{\rm{BS}}}
\newcommand{\SL}{\mbox{\rm{SL}}}
\newcommand{\WP}{\mbox{\rm{WP}}}
\newcommand{\GG}{\mathcal{G}}
\newcommand{\NM}{\mathcal{NM}}
\newcommand{\DM}{\mathcal{DM}}
\newcommand{\PCF}{\mathcal{PCF}}
\newcommand{\BDC}{\mathcal{BDC}}
\newcommand{\BC}{\mathcal{BC}}
\newcommand{\BDG}{\mathcal{BDG}}
\newcommand{\PG}{\mathcal{PG}}
\newcommand{\CG}{\mathcal{CG}}
\newcommand{\wM}{\widehat{M}\ }
\newcommand{\sigep}{\Sigma_{\varepsilon}}
\newcommand{\gamep}{\Gamma_{\varepsilon}}
\newcommand{\vep}{\varepsilon}
\newcommand{\cf}{context-free\ }
\newcommand{\pcf}{poly-context-free\ }
\newcommand{\pda}{pushdown automaton\ }
\begin{document}

\title{Multipass automata and group word problems}
\author[T. Ceccherini-Silberstein]{Tullio Ceccherini-Silberstein}
\address{Dipartimento di Ingegneria, Universit\`a del Sannio, C.so
Garibaldi 107, 82100 Benevento, Italy}
\email{tceccher@mat.uniroma3.it}
\author[M. Coornaert]{Michel Coornaert}
\address{Institut de Recherche Math\'ematique Avanc\'ee,
UMR 7501, Universit\'e  de Strasbourg et CNRS,
7 rue Ren\'e-Descartes,67000 Strasbourg, France}
\email{coornaert@math.unistra.fr}
\author[F. Fiorenzi]{Francesca Fiorenzi}
\address{Laboratoire de Recherche en Informatique,
Universit\'e Paris-Sud 11,
91405 Orsay Cedex, France}
\email{fiorenzi@lri.fr}
\author[P.E. Schupp]{Paul E. Schupp}
\address{Department of Mathematics,
University of Illinois at Urbana-Champaign,
1409 W. Green Street (MC-382)
Urbana, Illinois 61801- 2975, USA}
\email{schupp@math.uiuc.edu}
\subjclass[2000]{03B25, 05C05, 37B10, 37B15, 68Q70, 68Q80}
\keywords{pushdown automaton, multipass  automaton, context-free language,
finitely generated group, the Word Problem}
\author[N.W.M. Touikan]{Nicholas W. M. Touikan}
\address{
Department of Mathematical Sciences,
Stevens Institute of Technology,
1 Castle Point Terrace 
Hoboken, New Jersey 07030, USA}
\email{nicholas.touikan@gmail.com}

\date{\today}

\begin{abstract}
We introduce the notion of multipass automata as a generalization of pushdown automata 
and study  the classes of languages accepted by such machines. The class of languages accepted by 
deterministic multipass automata is exactly the Boolean closure of the class of
deterministic context-free languages while the class of languages accepted
by nondeterministic multipass automata is exactly the class of
poly-context-free languages, that is, languages which are the intersection
of finitely many context-free languages. We illustrate the use of these automata
by studying  groups whose word problems are in the above classes.
\end{abstract}
\maketitle

\section{Introduction}
The main purpose of this paper is to introduce a very natural machine model, 
that of \emph{multipass automata}.
These are essentially like pushdown automata except that they are able to read the 
input tape several times. It turns out that the class $\DM$ of languages accepted by 
deterministic multipass automata is exactly the Boolean closure of the class of 
deterministic context-free languages, which we denote by $\mathcal{BDC}$.

The class $\mathcal{NM}$ of languages accepted 
by nondeterministic multipass automata is exactly the class $\PCF$ of \emph{poly-context-free} languages, 
that is, languages which are the intersection of finitely many context-free languages.
It follows from \cite{wotschke1} that the following strict inclusions hold
\begin{equation}
\label{e:wotschke}
\DM = \BDC \subsetneq \NM = \PCF \subsetneq \BC
\end{equation}
where $\BC$ denotes the Boolean closure of all the context-free languages. Wotschke also
proved that the classes of context-free languages and deterministic multipass languages
are incomparable in the sense that neither one is contained in the other.
We also mention that Wotschke \cite{wotschke2} introduced a notion of \emph{Boolean acceptance} 
and presented a characterization of ${\mathcal BC}$ and of $\PCF$
in terms of Boolean acceptance.

Our basic references about context-free languages are the monographs by Harrison \cite{Harrison} and by Hopcroft and Ullman \cite{HU}.

As a motivating example for multipass automata, consider the word problem 
for the free abelian group of rank two with presentation $G = \langle a,b; ab=ba \rangle$.
The associated word problem is the language consisting of all words over the alphabet
$\Sigma = \{a, a^{-1},b, b^{-1} \}$ which have exponent sum $0$ on both $a$ and $b$. 
The pumping lemma for context-free languages shows that this word problem is not context-free.
However, this word problem is accepted by a multipass automaton $M$ working as follows.   
Given an input word $w \in \Sigma^*$, on the first pass $M$ checks if the exponent sum on $a$ in 
$w$ is $0$ and, on the second pass, $M$ checks if the exponent sum on $b$ in $w$ is $0$.
Then $M$ accepts $w$ at the end of the second pass if and only if both conditions are met.

In this paper we focus on applying multipass automata to study group word problems,
although we believe that they will be  useful in many other areas.
Brough \cite{Brough}   studied  the class $\mathcal{PG}$ of finitely generated groups
whose word problem is a poly-context-free language. 
Holt, Rees, R\"over and Thomas \cite{HRRT} studied  the class $\CG$ of finitely generated groups whose word problem is the complement of a context-free language.  
This class has several properties in common with the class of poly-context-free groups. 
But to  show that groups are in $\CG$ one generally complements nondeterministic context-free languages, so there are groups in $\CG$ which are not in  $\PG$, 
for example  the standard restricted  wreath product $\mathbb{Z} \wr \mathbb{Z}$ \cite{Brough}.

  The Muller-Schupp theorem \cite{MS} shows that a finitely generated group $G$ has context-free word problem
if and only if it is virtually free, that is, $G$ contains a free subgroup of finite index.  Since the word
problem for a finitely generated virtually free group is a deterministic context-free language, nondeterminism
does not increase the class of finitely generated groups with context-free word problem.  
This raises the question of the situation for $\PG$.

Brough \cite{Brough} conjectured that the class  $\mathcal{PG}$ coincides with 
the class $\mathcal{D}$ of (finitely generated) groups which are virtually a finitely generated subgroup of a direct product of free groups.  
We define $\mathcal{BDG}$ to be the class of finitely generated groups whose word problem
is in $\BDC$, the Boolean closure of deterministic context-free languages.
All groups in $\mathcal{D}$ are in $\BDG$ so the conjecture would again show that nondeterminism does
not increase the class of groups considered.

We will prove that  if $G \in \mathcal{BDG}$ (respectively $\mathcal{PG}$) and $S$ is a subgroup of finite index, 
and $\varphi$ is an automorphism of $G$ of finite order with $\varphi(S) = S$ then the HNN-extension
$$
H = \langle G,t; tst^{-1} = \varphi(s), s \in S \rangle
$$
is again in $\mathcal{BDG}$ (respectively $\mathcal{PG}$).   This theorem seems to us to use the maximum power of
multipass automata. While we initially did not believe that all groups
covered in the theorem were in $\mathcal{D}$  it turns out that if we start with a group $G$ in $\mathcal{D}$ 
then the HNN extension $H$ is indeed again in $\mathcal{D}$, that is, the class $\mathcal{D}$ itself is closed
under taking the mentioned HNN extensions.  The paper concludes with a proof of this
rather   delicate algebraic fact.  

So it turns out that starting with finitely generated virtually free groups, none of the closure properties
for the class $\PG$ which we establish take us outside the class $\mathcal{D}$.    
It now seems to us  that Brough's Conjecture is probably true.  
  
We point out that the complement of the word problem for a direct product of finitely many free groups is a context-free language and the class $\CG$ of finitely generated groups whose word problem is the complement of a context-free language is closed under taking finite extensions and finitely generated subgroups \cite{HRRT}.
Thus Brough's Conjecture also implies that $\PG \subsetneq \CG$.
    
  There is still a good method for showing that languages are not poly-context-free,
namely, one can use Parikh's theorem. We mention that Gorun \cite{gorun} used  properties of the 
Parikh map  to prove that a certain bounded matrix language is not poly-context-free.  
Brough \cite{Brough} developed  a very detailed analysis of semi-linear sets which allows her to prove a hierarchy theorem for poly-context-free languages.  For example, while  the word problem for the free abelian group of rank $k$ is an intersection of $k$ context-free languages it  is \emph{not} an intersection of $k-1$ context-free languages.
Brough \cite{Brough} also proves that none of the solvable Baumslag-Solitar groups $\langle b,t; tb^m t^{-1} = b^n \rangle$
where $0 < |m| < |n|$ are poly-context free. 

  The paper is organized as follows: we first define deterministic and nondeterministic multipass automata and study 
the closure properties of the corresponding classes of languages which they define. After proving the characterizations 
of the language classes, $\DM = \BDC$ and $\mathcal{NM} = \PCF$,
we show that both classes are closed under interleaved products and left quotients by finite sets.
We then  study group word problems.  Whether or not a group is in $\mathcal{BDG}$ 
or in $\mathcal{PG}$ is independent of the group presentation (Theorem \ref{t:multipass-w-p}) and
both classes are closed under taking direct products (Corollary \ref{c:product}),
finite extensions (Theorem \ref{t:finite-extension}) and finitely generated subgroups (Theorem \ref{t:multipass-w-p}).  
These  easy results are to expected for a class of groups with  word problems in a reasonable formal
language class and   were known for $\mathcal{PG}$ and $\CG$.  
We then  show (Theorem \ref{t:HNN}) that $\BDG$ and $\PG$  are closed under
taking the HNN-extensions mentioned above.  For $n \ge 2$ we construct a faithful representation 
of the Baumslag-Solitar group $\BS(1,n^2)$ into 
$\SL(2, \mathbb{Z}[\frac{1}{n}])$ showing that those  matrix groups do not have \pcf word problem. 
In the last section we prove the rather delicate algebraic fact we alluded to above, namely
that the class $\mathcal{D}$ is closed under taking the mentioned HNN extensions.

\section{Multipass automata and closure properties} 
\subsection{Deterministic multipass automata}
We first consider deterministic multipass automata.
Let $\Sigma$ be a finite input alphabet and let $k \ge 1$ be a positive integer.
     A \emph{deterministic} $k$-\emph{pass automaton} is a tuple 
$$M = ([k],Q, \Sigma, \Gamma, \sharp, \delta, q_0, H_a,H_r)$$  
where $[k] = \{1,2,\ldots,k\}$ is the \emph{pass-counter} and, as usual, $Q$ is a finite set of \emph{states} and $q_0 \in Q$ is the \emph{initial state}.
The machine $M$ starts its first pass at the beginning of the input tape in state $q_0$ with an empty stack.
The finite set $\Gamma \supseteq \Sigma$ is the \emph{stack alphabet}.  
We introduce the notation $\Sigma_{\varepsilon} := \Sigma \bigcup \{\varepsilon\}$
and $\Gamma_{\varepsilon} := \Gamma \bigcup \{\varepsilon\}$,
where $\varepsilon \in \Sigma^*$ denotes the empty word.
 
The \emph{end-marker} $\sharp$ is a letter not in $\Sigma$, and given any input word $w \in \Sigma^*$, the machine $M$ will process 
$w\sharp$, the word $w$ followed by $\sharp$.
The distinct \emph{halting states} $H_a$ and $H_r$ are not in $Q$.

We define the \emph{transition function} $\delta$ by cases.   
When not reading the end-marker the transition function is a  map
\[
\delta \colon [k] \times Q \times \sigep \times \gamep \to  Q \times \Gamma^* \cup \emptyset
\] 
subject to the restrictions given below.

As usual, the  interpretation of 
$$
\delta(j,q, \sigma, \gamma) = (q', \zeta)  
$$
where $j \in [k]$, $q, q' \in Q$, $\sigma \in \Sigma$, $\gamma \in \gamep$, and $\zeta \in   \Gamma^*$, is that if the machine 
is on its $j$-th pass, in state $q$, and reading the letter $\sigma$ on the input tape with $\gamma$ on top of the stack, 
then the automaton changes state to $q'$, replaces $\gamma$ by the word $\zeta$ and advances the input tape. 
Note that $\gamma$ may be $\varepsilon$ and  the machine may  continue working when the stack is empty.
Indeed,  our machines start each pass with empty stack.

The interpretation of 
\[
\delta(j,q, \varepsilon, \gamma) = (q', \zeta)  
\]
is, similarly, that if the machine is running its $j$-th pass, in state $q \in Q$, and reading a 
letter $\gamma \in \Gamma$ on top of the stack, then, independent of the input symbol being read, 
the automaton changes state to $q'$, and replaces $\gamma$ by $\zeta \in \Gamma^*$. 
Such transitions are called $\varepsilon$-\emph{transitions} and, in this case, the reading head does not advance on the tape.

Since we are considering \emph{deterministic} machines, we require that if there is a transition 
$\delta(j,q, \varepsilon, \gamma)$ then $\delta(j,q,\sigma,\gamma)$ is empty for all $\sigma \in \Sigma$.  
Thus the machine has no choice between reading a letter and advancing the tape or making an $\varepsilon$ transition.   Also, for given $i \in \{1,2, \ldots, k\}$, $q \in Q$ and $\gamma$ on top of the stack, the machine always has either a transition reading the input letter and advancing
the tape or an $\varepsilon$-transition. Note also that the machine cannot make an $\varepsilon$-transition when the stack is empty.

When the automaton reads the end-marker $\sharp$ on a nonfinal pass $j < k$ the transition function 
is a map
\[
\delta \colon \{1,2,\ldots, k-1\} \times Q \times \{\sharp\} \times \gamep \to Q.
\]  
The interpretation of $\delta(j,q, \sharp, \gamma) = q'$ is that on reading the end-marker $\sharp$ on finishing the $j$-th pass, in state $q \in Q$ with $\gamma \in \gamep$ on top of the stack, the automaton changes state to $q'$ to begin the next pass. 
As part of the definition of the way the machine  works, the reading head is automatically reset to the beginning of the input, the stack is emptied, and the pass-counter is advanced to $j+1$. 
Note that although the pass-counter is read by the machine, it functions automatically.

  When reading the end-marker on the last pass the transition function is a map
\begin{equation}
\label{e:sharp}
\delta \colon \{k\} \times Q \times \{\sharp\} \times \gamep \to \{H_a,H_r\}.
\end{equation}

   On reading the end-marker on the last pass the machine must either halt in state $H_a$ 
and accept or halt in state $H_r$ and reject.  
Note that it is only on reading the end-marker on the last pass that the machine can go to either $H_a$ or $H_r$ and acceptance is thus completely determined by the last transition.

   A deterministic $k$-pass automaton $M$ \emph{accepts} a word $w \in \Sigma^*$ if and only if
when $M$ is started in its initial state with an empty stack and with $w\sharp$ written on the input tape, then $M$ halts in the accepting state $H_a$ at the end of its $k$-th pass. 
We write $M \vdash w$ if $M$ accepts $w$ and denote by
\[
L(M) := \{w \in \Sigma^*: M \vdash w\}
\]
the \emph{language accepted} by $M$.

A language which is accepted by a deterministic $k$-pass automaton is called a \emph{deterministic $k$-pass language}.
A \emph{deterministic multipass language} is a language accepted by a deterministic $k$-pass automaton for some $k \geq 1$. 
We denote by $\mathcal{DM}$ the class of all deterministic multipass languages.

\subsection{Nondeterministic multipass automata}
 
   A \emph{nondeterministic} $k$-pass automaton is a tuple
$$M = ([k], Q, \Sigma, \Gamma, \sharp, \delta, q_0, H_a, H_n)$$  
where the notation is as before but the special state $H_n$ is a \emph{no decision} state.
When not reading the end-marker the transition function is a map 
$$
\delta \colon [k] \times Q \times \sigep \times \gamep \to \mathcal{P}_f(Q \times \Gamma^*)
$$
where $\mathcal{P}_f(Q \times \Gamma^*)$ denotes the collection of all finite subsets of 
$Q \times \Gamma^*$.

The interpretation of 
$$
\delta(j,q, \sigma, \gamma) = \{(q_1,\zeta_1),(q_2,\zeta_2),\ldots,(q_r,\zeta_r)\} 
$$
is that if the machine $M$ is on its $j$-th pass in state $q$ and reads the letter $\sigma$ on the input tape with the letter $\gamma \in \gamep$ on top of the stack, then the automaton can choose any of the pairs $(q_i,\zeta_i)$ and go to $q_i$ as its next state, replace $\gamma$ by the word $\zeta_i$ in the top of the stack, and advance the input tape.

The interpretation of 
$$
\delta(j,q, \varepsilon, \gamma) = \{(q_1,\zeta_1),(q_2,\zeta_2),\ldots,(q_r,\zeta_r)\} 
$$
is, similarly, that if the machine is on its $j$-th pass in state $q$ and with $\gamma$ on top of the stack, then, independent of the input symbol being read, the automaton can choose any of the pairs $(q_i,\zeta_i)$ and go to $q_i$ as its next state, replace $\gamma$ by $\zeta_i$, but the reading head does not advance on the tape.
There may now be both transitions $\delta(j,q,\sigma,\gamma)$ and $\delta(j,q,\varepsilon,\gamma)$.  If there is no transition from a given configuration the machine halts. 
We again require that the machine cannot make an $\varepsilon$-transition when the stack is empty.

   On reading the end-marker on a nonfinal pass $j < k$ the transition function is a map
$$\delta \colon \{1,2,\ldots,k-1\} \times Q \times \{\sharp\} \times \gamep \to \mathcal{P}(Q).$$
The interpretation of $\delta(j,q,\sharp,\gamma) = Q_j \subseteq  Q$ is that 
when the automaton reads the end-marker on a nonfinal pass $j < k$, in state $q$ with $\gamma$ 
on top of the stack, the automaton can change state to any $q' \in Q_j$ to begin the next pass.  
As before, the reading head is automatically reset to the beginning of the input, the stack is emptied, and the pass-counter is advanced to $j+1$.

   On reading the end-marker on the final pass, the transition is a map
\[
\delta \colon \{k\} \times Q \times \{\sharp\} \times \gamep \to \{H_a, H_n\}.
\]

When the automaton reads the end-marker on the last pass 
in state $q \in Q$, with $\gamma \in \gamep$ on the top of the stack,
the machine must either halt in state $H_a$ and accept or halt in the no-decision state $H_n$. 
It is only on reading the end-marker on the last pass that the machine can go to either $H_a$ or $H_n$.

A nondeterministic $k$-pass automaton $M$ \emph{accepts} a word $w \in \Sigma^*$ if and only if
when $M$ is started in its initial state with an empty stack and with $w\sharp$ written on the input tape, some possible computation of $M$ on $w\sharp$ halts in the accepting state $H_a$.
As in the deterministic case, we then write $M \vdash w$ if $M$ accepts $w$, and denote by
$L(M) := \{w \in \Sigma^*: M \vdash w\}$ the \emph{language accepted} by $M$. 

A language which is accepted by a nondeterministic $k$-pass automaton is called a \emph{nondeterministic $k$-pass language}.
A \emph{nondeterministic multipass language} is a language accepted by a nondeterministic
$k$-pass automaton for some $k \geq 1$.
We denote by $\mathcal{NM}$ the class of all nondeterministic multipass languages.
Of course, $\mathcal{DM} \subseteq \mathcal{NM}$.

\subsection{Properties of deterministic and nondeterministic multipass languages}
A basic fact about the class of languages accepted by deterministic pushdown automata is that the class is closed under complementation. 
The only obstacle to proving this is that at some point the automaton might go into an unbounded sequence of $\vep$-transitions.
Call a deterministic \pda \emph{complete} if it always reads its entire input.  
Similarly, we call a multipass automaton \emph{complete} if it always reads the end-marker on every pass. A basic lemma \cite[Lemma 10.3]{HU} shows that for any deterministic \pda there is a complete \pda accepting the same language. 
The proof for deterministic multipass automata is essentially the same but even easier since we do not have to worry about final states.

\begin{lemma} 
\label{l:complete}
For every deterministic multipass automaton there is a complete multipass automaton accepting the same language.
\end{lemma}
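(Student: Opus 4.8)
The plan is to mimic the standard construction for pushdown automata (as in \cite[Lemma 10.3]{HU}) but adapt it to the multipass setting, exploiting the fact that the end of each pass does not carry accept/reject information on the nonfinal passes, so the bookkeeping is lighter. First I would analyze how a deterministic multipass automaton $M$ can fail to be complete: on a given pass it may enter an infinite sequence of $\varepsilon$-transitions and hence never reach the end-marker $\sharp$ of that pass. Since $M$ is deterministic, from any configuration the subsequent behaviour is uniquely determined, so such a nonterminating $\varepsilon$-loop is detectable by a finite inspection: if the machine makes more than $|Q| \cdot (|\Gamma|+1)$ consecutive $\varepsilon$-transitions without the stack ever shrinking to a new minimum height, it must repeat a (state, top-of-stack) pair while only pushing, and will loop forever. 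This is exactly the standard ``looping'' analysis for deterministic pushdown automata, and I would invoke it essentially verbatim.

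The construction then proceeds as follows. Given $M$, I build $M'$ with the same pass-counter $[k]$ and a slightly enlarged state set and stack alphabet (adding a bottom-of-stack marker $Z_0$ so that the machine can recognize when its stack is ``logically empty'' and adding a few new states). $M'$ simulates $M$ step by step, but also runs a side counter (encoded in the finite control, since the relevant bound is a fixed constant depending only on $M$) tracking consecutive $\varepsilon$-moves in which the stack height does not drop below previously seen values. If this counter certifies that $M$ has entered an $\varepsilon$-loop on pass $j$, then: if $j < k$, $M'$ immediately performs the pass-$j$ end-of-pass transition that $M$ would have performed had it reached $\sharp$ in its looping state --- but note the subtlety that $M$ never actually reaches $\sharp$, so we must instead \emph{define} a suitable behaviour; the cleanest choice is to have $M'$ move to a dedicated ``dead'' state for the remainder of pass $j$, which consumes the rest of the input without changing anything, reaches $\sharp$, and then transitions to a dedicated dead state for pass $j+1$, and so on through pass $k$, finally halting in $H_r$. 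If $j = k$, $M'$ similarly consumes the remaining input and halts in $H_r$. In all cases where $M$ does reach $\sharp$ on every pass, $M'$ faithfully reproduces $M$'s computation and its final $H_a/H_r$ decision. Thus $L(M') = L(M)$ and $M'$ always reads $\sharp$ on every pass, i.e.\ $M'$ is complete.

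One routine point to check is that $M'$ remains deterministic: the $\varepsilon$-move counter and the stack-height monitoring are computed deterministically from $M$'s configuration, and the decision to enter the ``dead pass'' branch is triggered by a deterministic predicate on the finite control, so no nondeterminism is introduced; once in a dead state, $M'$ reads each input letter and advances with no $\varepsilon$-transitions, consistent with the determinism requirements in the definition. I would also note that the stack-height bookkeeping only needs to remember whether the current height is at or below the lowest height seen since the last ``productive'' move, which can be done by having $M'$ push marked copies of stack symbols; this is exactly the device used in \cite[Lemma 10.3]{HU}.

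The main obstacle is conceptual rather than technical: unlike the pushdown case, when $M$ loops on a nonfinal pass $j$ there is no genuine ``state in which $M$ reads $\sharp$,'' so one cannot simply copy the loop-detection-then-continue recipe; one must decide what $M'$ should do, and the correct observation is that since $M$ loops forever on pass $j$ it never accepts, so $M'$ is free to abort the whole computation and reject. Making this precise --- and checking that the constant bounding the detectable loop length depends only on $M$, so that it fits in the finite control --- is the only part requiring care, and it follows the well-known argument for deterministic pushdown automata with only cosmetic changes.
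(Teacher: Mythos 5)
Your proposal is correct and follows essentially the same route as the paper: detect the unbounded $\varepsilon$-loop via the standard deterministic-PDA argument of \cite[Lemma 10.3]{HU}, divert into a dedicated rejecting sink state that consumes the rest of the input on the current and all remaining passes, and reject on reading the end-marker of the final pass. The paper's proof is in fact terser than yours (it states the loop-detection criterion informally and notes only that the construction ``can be made effective''), so your additional bookkeeping details are consistent with, not divergent from, its argument.
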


\begin{proof}  
Let $M = ([k],Q, \Sigma, \Gamma, \sharp, \delta, q_0, H_a,H_r)$ be a deterministic $k$-pass automaton.
We construct a complete $k$-pass automaton $M' = ([k],Q', \Sigma, \Gamma, \sharp, \delta', q_0, H_a,H_r)$ as follows. $Q' := Q \sqcup \{r\}$, where $r$ is a new rejecting state.
The machine $M'$ basically works as $M$ so that for the new transition map $\delta'$ we have
\[
\delta'(j,q,\sigma,\gamma) = \delta(j,q,\sigma,\gamma)
\]
and
\[
\delta'(j,q,\sharp,\gamma) = \delta(j,q,\sharp,\gamma)
\]
for all $j \in [k]$, $q \in Q$, $\sigma \in \Sigma$, and $\gamma \in \Gamma$.
However, if on some pass $j \in [k]$ and in some state $q \in Q$ with a letter $\gamma$ 
on top of the stack, $M$ would start an unbounded sequence of $\vep$-transitions without erasing that occurrence of $\gamma$ then $M'$ will instead enter the rejecting state $r$ where
it remains and then simply reads all input letters for all remaining passes until it reads the end-marker on the last pass and then rejects. 
\par
For example, if we have the sequence of transitions
\[
\delta(j,q_i,\varepsilon, \gamma_i) = (q_{i+1}, \zeta_{i+1}),
\]
where $q_i \in Q$ ($q_0 = q$), $\gamma_i \in \Gamma$ ($\gamma_0 = \gamma$), $\zeta_{i+1} \in \Gamma^*$, $i \in \N$, and there eixts $i_0 > 0$ such that $q_{i_0} = q$ and $\zeta_{i_0} = \zeta'\gamma$ for some $\zeta' \in \Gamma^*$, we would then set
\[
\begin{split}
\delta'(j',q,\varepsilon,\gamma) & := (r,\gamma)\\
\delta'(j',r,\sigma,\gamma) & := (r,\gamma)\\
\delta'(j',r,\sharp,\gamma) & := \begin{cases} r & \mbox{ if } j' < k\\
H_r & \mbox{ if } j' = k \end{cases}
\end{split}
\]
for all $j'=j,j+1, \ldots, k$, $q \in Q$, $\sigma \in \Sigma$, and $\gamma \in \Gamma \bigcup \{\varepsilon\}$.
\par
It is clear that $L(M') = L(M)$.  
This construction can be made effective but we only need the stated result.
\end{proof}

\begin{proposition}  
\label{p:dmp-complement}
The class $\mathcal{DM}$ of deterministic multipass languages is closed under complementation.
\end{proposition}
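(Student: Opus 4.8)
The plan is to reduce to the case of a \emph{complete} deterministic multipass automaton and then simply interchange the accepting and rejecting halts on the last pass. Concretely, suppose $L \in \DM$ and let $M = ([k],Q,\Sigma,\Gamma,\sharp,\delta,q_0,H_a,H_r)$ be a deterministic $k$-pass automaton with $L(M) = L$. By Lemma~\ref{l:complete} we may assume that $M$ is complete, i.e.\ on every input word $w \in \Sigma^*$ the (unique, since $M$ is deterministic) computation of $M$ on $w\sharp$ reads the end-marker on every one of its $k$ passes. In particular this computation reaches a configuration in which $M$ reads $\sharp$ on the $k$-th pass, and since the transition $\delta(k,q,\sharp,\gamma) \in \{H_a,H_r\}$ is then applied, the computation halts in exactly one of the two halting states. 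Thus $M$ partitions $\Sigma^*$ into the set of words on which its computation halts in $H_a$, which is $L$, and the set of words on which it halts in $H_r$, which is $\Sigma^* \setminus L$.

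Next I would define a deterministic $k$-pass automaton $M^{c}$ agreeing with $M$ in all of its data and transitions except that the final-pass end-marker transition is complemented: for all $q \in Q$ and $\gamma \in \gamep$ we set $\delta^{c}(k,q,\sharp,\gamma) := H_r$ if $\delta(k,q,\sharp,\gamma) = H_a$, and $\delta^{c}(k,q,\sharp,\gamma) := H_a$ if $\delta(k,q,\sharp,\gamma) = H_r$. Since $M$ and $M^{c}$ agree on every transition that does not read $\sharp$ on the last pass, they perform exactly the same computation on every input $w\sharp$ up to the moment that computation first reads $\sharp$ on the $k$-th pass; by the previous paragraph that moment is always reached, and at that moment $M^{c}$ halts in $H_a$ precisely when $M$ halts in $H_r$. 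Hence $L(M^{c}) = \Sigma^* \setminus L(M) = \Sigma^* \setminus L$, and since $M^{c}$ is again a deterministic $k$-pass automaton we conclude $\Sigma^* \setminus L \in \DM$.

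The only real content here is the passage to a complete automaton, and this is the step I expect to be the crux: for a general deterministic multipass automaton a computation might disappear into an unbounded sequence of $\vep$-transitions and never read the last end-marker, in which case it neither accepts nor rejects, and merely swapping $H_a$ and $H_r$ would still fail to read the end-marker and so would not yield the complement. This is exactly the obstacle already resolved by Lemma~\ref{l:complete} (the multipass analogue of the classical fact that deterministic context-free languages are closed under complementation), so once that lemma is available the argument above is routine, and one does not even need $M^{c}$ to be constructed effectively for the stated result.
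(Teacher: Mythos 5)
Your proof is correct and follows exactly the paper's argument: pass to a complete automaton via Lemma~\ref{l:complete} and then exchange $H_a$ and $H_r$ on the final end-marker transition. Your write-up simply makes explicit the details (determinism giving a unique computation, completeness guaranteeing the last end-marker is always read) that the paper leaves implicit.
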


\begin{proof} 
Let $L \subseteq \Sigma^*$ be a deterministic multipass language and let $M$ be a complete deterministic multipass automaton accepting $L$.
The  complement $\neg L:=\Sigma^* \setminus L$ of $L$ is accepted by the multipass automaton
$M^\neg$ which is the same as $M$ except that it does the opposite of what $M$ does on reading
the end-marker on the final pass, that is, it exchanges the accepting and rejecting states.
\end{proof}

\begin{proposition}
\label{p:det-multi}
The class of deterministic context-free languages coincides with the class of deterministic $1$-pass languages and the class of context-free languages coincides with the class of nondeterministic $1$-pass languages.
\end{proposition}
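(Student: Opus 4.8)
The plan is to prove the two claimed equalities separately, and in each case to prove the two inclusions separately. The content is entirely bookkeeping: a $1$-pass automaton reads its input exactly once and then, on reading $\sharp$, commits to accept or reject, so it is "morally" a pushdown automaton; the only discrepancies are cosmetic features of our model (the end-marker $\sharp$, the mandatory single transition to $H_a$ or $H_r$, the fact that $\Gamma \supseteq \Sigma$, and the absence of a set of accepting states in the usual sense). The main obstacle, such as it is, will be the careful matching of acceptance conventions, in particular simulating "acceptance by final state" of a classical (deterministic) \pda by the "acceptance is determined by the last transition on reading $\sharp$" convention of our $1$-pass automata, and conversely.

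First I would treat the nondeterministic case, which is cleaner. Given a classical \pda $P$ accepting a \cflang $L$ by final state, build a nondeterministic $1$-pass automaton $M$ that, while not reading $\sharp$, simulates $P$ step for step (our transition function already has exactly the pushdown format $\delta\colon Q\times\sigep\times\gamep\to\PP_f(Q\times\Gamma^*)$, with the harmless proviso that $\varepsilon$-transitions on an empty stack are forbidden---this is no loss since one may keep a bottom-of-stack marker $Z_0\in\Gamma\setminus\Sigma$ and start each pass by pushing it). On reading $\sharp$ on the (only) pass, $M$ sends a state $q$ to $H_a$ if $q$ is a final state of $P$, and to $H_n$ otherwise. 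Then $M\vdash w$ iff some run of $P$ on $w$ ends in a final state, i.e.\ iff $w\in L$. Conversely, given a nondeterministic $1$-pass automaton $M$, let $P$ be the \pda with the same states and transitions (for non-$\sharp$ moves), with set of final states $F:=\{q\in Q: \delta(1,q,\sharp,\gamma)=H_a \text{ for some } \gamma\in\gamep\}$; since $\sharp\notin\Sigma$ does not occur in $w$, a run of $M$ on $w\sharp$ reaching $H_a$ corresponds exactly to a run of $P$ on $w$ reaching $F$. One subtlety: in our model $\delta(1,q,\sharp,\gamma)$ depends on the stack top $\gamma$, whereas "final state" acceptance does not; this is handled by refining the state set to remember the current stack top (a standard trick), or by noting that a \cflang is also accepted by some \pda by empty stack and adjusting accordingly. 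Hence the class of nondeterministic $1$-pass languages is exactly $\CC\FF$, the \cflangs.

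The deterministic case is handled by the same correspondence, now invoking Lemma~\ref{l:complete}: starting from a \dcflang $L$, take a complete \dpda accepting $L$ by final state (completeness removes the only obstruction, unbounded $\vep$-sequences, exactly as in the lemma's proof), and convert it into a deterministic $1$-pass automaton as above, routing final states to $H_a$ and the rest to $H_r$ on reading $\sharp$. The determinism conditions in our definition (no choice between an $\varepsilon$-move and a letter-move, and always exactly one applicable transition) translate directly into the usual determinism conditions for a \dpda. In the reverse direction, from a deterministic $1$-pass automaton one reads off a \dpda whose final states are those from which $\sharp$ leads to $H_a$ (again remembering the stack top in the state if one wishes to be scrupulous about the $\gamma$-dependence of the $\sharp$-transition), and the accepted language is unchanged.

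The hard part, to the extent there is one, is not conceptual but notational: making sure the bottom-of-stack marker $Z_0$, the handling of $\sharp$, and the passage between "final state" and our last-transition acceptance convention are all spelled out consistently, and verifying that none of these adjustments disturbs determinism in the deterministic case. I would keep the proof short by stating the correspondence once and remarking that it respects determinism, citing \cite[Chapter 10]{HU} or \cite{Harrison} for the standard facts about \pdas (equivalence of acceptance by final state and by empty stack, normal forms, and the closure of \dcflangs under complementation via complete machines) that underlie these conversions.
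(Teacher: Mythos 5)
Your proposal is correct and follows essentially the same route as the paper: a mutual step-for-step simulation between $1$-pass automata and pushdown automata, using a bottom-of-stack marker $Z_0$, recording the stack top in the state to absorb the $\gamma$-dependence of the $\sharp$-transition, and invoking Lemma~\ref{l:complete} so that the deterministic machine always reaches $\sharp$. The only point the paper spells out that you leave implicit is the bookkeeping (via a duplicated copy of the state set) of whether a nondeterministic pushdown automaton passes through a final state during a trailing sequence of $\varepsilon$-moves; under the natural semantics your machine can simply guess when to read $\sharp$, so this is a presentational rather than a mathematical difference.
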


\begin{proof}
This result is intuitively clear but we need to check some details. 
Let $L \subseteq \Sigma^*$ be a deterministic context-free language and let 
$M = (Q,\Sigma,\Gamma, \delta, q_0, F, Z_0)$ be a complete deterministic pushdown automaton accepting $L$ by entering a final state. Note that $M$ has start symbol $Z_0$ on the stack
and scans the whole input.
Then $L$ is accepted by the deterministic $1$-pass automaton
$$
M' = ([1], Q', \Sigma, \Gamma, \sharp, \delta', q_0', H_a, H_r)
$$ 
defined as follows. 
The set of states of $M'$ is $Q':=Q \sqcup \{q_0'\}$.
Then, on reading the first letter of the input, $M'$ adds $Z_0$ to the stack followed by whatever $M$ would add to the stack, so
\[
\delta'(1, q_0', \sigma, \varepsilon) := (q, Z_0\zeta) \mbox{ \ \ if \ \ } \delta(q_0,\sigma, Z_0) = (q,\zeta)
\]
for all $\sigma \in \Sigma$, where $q \in Q$ and $\zeta \in \Gamma^*$.
Then, $M'$ simulates $M$:
\[
\delta'(1,q,\sigma,\gamma) = \delta(q,\sigma,\gamma)
\]
and, finally,
\[
\delta'(1,q,\sharp,\gamma) = 
\begin{cases} H_a & \mbox{ \ \ if \ \  } q \in F\\
H_r & \mbox{ otherwise,}
\end{cases}
\]
for all $q \in Q$, $\sigma \in \Sigma$, and $\gamma \in \Gamma$.
\par
Thus $L = L(M')$ is a deterministic $1$-pass language.
The proof is essentially the same for the nondeterministic case.
However we have to replace $Q'$ by $Q'':= Q \sqcup \overline{Q} \sqcup \{q_0',r\}$, where
$\overline{Q}$ is a disjoint copy of $Q$. Then, on a sequence of $\vep$-transitions of $M$, the $1$-pass automaton $M'$ remembers if $M$ ever entered a final state in $F$ during the sequence
(this is achieved by movind from state $q$ to its ``accepting'' copy $\overline{q} \in \overline{Q}$). Then, if $M'$ reads $\sharp$ as its next letter it accepts if $M$ did enter a final state or rejects if not.
Also, if $M$ emptied its stack without entering a final state on its current input, then $M'$ goes to a rejecting state $r \in Q'$ in which it remains and simply reads the remaining input until it encounters $\sharp$ and then rejects.

Conversely, let $L$ be a deterministic $1$-pass language and let 
$$
M = ([1], Q, \Sigma, \Gamma, \sharp, \delta, q_0, H_a, H_r)
$$ 
be a deterministic $1$-pass automaton accepting $L$ which, by Lemma \ref{l:complete}, we may suppose being complete. We now construct a (complete) deterministic \pda 
$M' = (Q', \Sigma, \Gamma', \delta', q_0,F,Z_0)$ which accepts $L$ as follows. 
$Q' := \{q_0'\} \sqcup (Q \times \Gamma_{\varepsilon})$ and $\Gamma' := \Gamma \sqcup \{Z_0\}$.

$M'$ starts with a new beginning symbol $Z_0 \notin \Gamma$, which it will never erase and which it treats it the same way as $M$ treats an empty stack, and then simulates $M$: thus
\[
\delta'(q_0, \sigma, Z_0) := ((q',\gamma'), \zeta \gamma') \mbox{ \ \ if \ \  } \delta(1,q_0, \sigma, \varepsilon) = (q',\zeta \gamma')
\]
and
\[
\delta'((q, \gamma''), \sigma, \gamma) := ((q', \gamma'''), \zeta \gamma''') \mbox{ \ \ if \ \  }
\delta(1,q, \sigma, \gamma) = (q', \zeta \gamma'''),
\]
where $q'\in Q$, $\gamma', \gamma''' \in \Gamma_{\varepsilon}$, and $\zeta \in \Gamma^*$,
for all $q \in Q$, $\sigma \in \Sigma \cup \{\varepsilon\}$ and $\gamma, \gamma'' \in \Gamma$.
The set $F$ of final states of $M'$ is defined by
\[
F := \{(q,\gamma) \in Q': \delta(1,q,\sharp,\gamma) = H_a\}.
\]
Then, it is then clear that $M'$ accepts $L$ by final state, so that $L = L(M')$.
Again, the proof is essentially the same for the nondeterministic case. 
$M'$ will now have two copies of the state set of $M$, one accepting and one rejecting and simulate $M$. However, on $\vep$-transitions of $M$, the pushdown automaton $M'$ always goes to a non-accepting copy of the corresponding state. On a transition advancing the tape, if $M$ would accept when reading $\sharp$ as the next letter, $M'$ goes to the accepting copy of the state of $M'$ and otherwise to the non-accepting copy. 
\end{proof}

\begin{proposition}  
\label{p:union-int}
Both the classes of deterministic and nondeterministic multipass languages are closed under union and intersection.
\end{proposition}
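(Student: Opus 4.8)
The key observation is that a multipass automaton has enough passes to run one machine immediately after another, and it suffices to handle two languages since the general case then follows by iteration. I would first treat the deterministic case. Fix deterministic multipass automata $M_1$ and $M_2$, with $k_1$ and $k_2$ passes, accepting $L_1,L_2\subseteq\Sigma^*$, and by Lemma~\ref{l:complete} assume both are complete. The plan is to build a deterministic $(k_1+k_2)$-pass automaton $M$ which runs $M_1$ verbatim on passes $1,\dots,k_1$ and runs $M_2$ on passes $k_1+1,\dots,k_1+k_2$, playing $M_2$'s $i$-th pass as $M$'s $(k_1+i)$-th pass. Because the reading head is reset and the stack is emptied at the end of every pass, $M_2$ is started correctly from an empty stack at the beginning of pass $k_1+1$ and whatever $M_1$ left on the stack is irrelevant. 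The only real bookkeeping is at the seam: when $M_1$ reads $\sharp$ at the end of pass $k_1$ it would halt in $H_a$ or $H_r$, and instead, for the \emph{intersection}, $M$ hands control to $M_2$'s initial state when $M_1$ accepts and enters a new ``losing'' state $\ell$ when $M_1$ rejects, where $\ell$ simply scans the remaining input on all the remaining passes and finishes in $H_r$; for the \emph{union} the roles are dualized, with $M_1$-rejection handing control to $M_2$ and $M_1$-acceptance entering a new ``winning'' state $w$ which scans to the end and finishes in $H_a$. When $M$ reads $\sharp$ at the end of the last pass it copies $M_2$'s decision whenever it is in a state of $M_2$, goes to $H_a$ from $w$, and to $H_r$ from $\ell$. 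One checks that every transition prescribed is single-valued, never puts an $\vep$-transition in competition with a tape-advancing transition, and never fires an $\vep$-transition on an empty stack, so that $M$ is a legitimate deterministic multipass automaton; and then, tracing the unique run of $M$ on an input $w$ — with completeness of $M_1$ guaranteeing that this run actually reaches the seam and so gets to test $w$ against $M_2$ — one reads off $L(M)=L_1\cap L_2$ (resp. $L_1\cup L_2$).

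For nondeterministic multipass automata the same chained construction handles intersection, and no completeness is needed: $M$ simulates $M_1$ on passes $1,\dots,k_1$, a branch that reaches $H_a$ at the end of pass $k_1$ hands control to $M_2$'s initial state for the remaining $k_2$ passes and accepts precisely when that branch of $M_2$ reaches $H_a$, while a branch of $M_1$ that diverges, gets stuck, or would enter $H_n$ simply dies; so an accepting computation of $M$ on $w$ is exactly an accepting $M_1$-computation spliced to an accepting $M_2$-computation, giving $L(M)=L_1\cap L_2$. For nondeterministic union this chaining does not work, because every branch of $M_1$ on an input of $L_2\setminus L_1$ might diverge before reaching the seam; instead $M$ should guess at the very start which of $M_1,M_2$ to use as a witness, simulate that machine over its own passes, and then enter a winning state (scanning to the end, halting in $H_a$) after its last accepting pass or a dead non-accepting state otherwise, padding the run out to $k_1+k_2$ passes. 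Then $M$ has an accepting computation on $w$ iff the guessed machine accepts $w$, so $L(M)=L_1\cup L_2$.

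The real work, as always with such constructions, is not any computation but verifying that the simulation is faithful across the pass boundaries and that the machine built satisfies all the formal constraints in the definitions of deterministic (respectively nondeterministic) multipass automata. The one genuine subtlety is the deterministic union: an unbounded sequence of $\vep$-transitions inside $M_1$ would keep $M$ from ever reaching the test against $L_2$, which is exactly why Lemma~\ref{l:complete} is applied at the outset; for nondeterministic automata the analogous difficulty is sidestepped by having $M$ commit to its witness before any simulation begins.
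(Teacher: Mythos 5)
Your proof is correct, and for three of the four cases (deterministic union, deterministic intersection, nondeterministic intersection) it is essentially the paper's own construction: run $M_1$ verbatim on the first $k_1$ passes and, at the seam, either hand control to $M_2$ or fall into an absorbing winning/losing state that is resolved on the final pass. The one place you genuinely diverge is nondeterministic union. The paper chains there as well: a branch of $M_1$ that reaches the end-marker of pass $k_1$ without accepting hands control to $M_2$. As you observe, this is delicate for nondeterministic machines: if on some $w\in L_2\setminus L_1$ every branch of $M_1$ dies mid-pass or loops on $\vep$-transitions before the seam, then no branch of the chained machine ever tests $w$ against $M_2$, and Lemma~\ref{l:complete} is only established for \emph{deterministic} automata. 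Your fix --- guessing the witness machine at the outset, simulating it over its own passes, and padding the run out to $k_1+k_2$ passes --- sidesteps this cleanly; the paper's chaining can also be repaired by first normalizing $M_1$ so that every input has at least one branch surviving to each end-marker, but that normalization is not carried out in the paper. One implementation detail worth a sentence in your write-up: since $\vep$-transitions are forbidden on an empty stack, the initial ``guess'' must be folded into the first letter-reading (or end-marker-reading) transition rather than made by a silent move. You are also right, and more explicit than the paper, that completeness of $M_1$ is genuinely needed for the deterministic union (an $\vep$-loop in $M_1$ would prevent $M$ from ever reaching the test against $L_2$), whereas for the two intersection cases a diverging branch of $M_1$ is harmless because it only kills words already outside $L_1\cap L_2$.
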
  
 
\begin{proof} 
Let $L_i \subseteq \Sigma^*$ be accepted by a deterministic $k_i$-pass automaton $M_i$ for $i = 1,2$.
We suppose that $M_1$ and $M_2$ have disjoint state sets $Q_1$ and $Q_2$, respectively, and set 
$k := k_1 + k_2$.  

We construct a $k$-pass automaton $M$ accepting the union $L_1 \cup L_2$ as follows. 
$M$ will have state set $Q_1 \cup Q_2 \cup \{a\}$, where $a$ is a special accepting state.
On the first $k_1$ passes $M$ simulates $M_1$.
On reading the end-marker $\sharp$ at the end of pass $k_1$, if $M_1$ would accept
then $M$ goes to the accepting state $a$ in which it remains while reading the input
for the remaining $k_2$ passes, and then accepts on the $k$-th pass. 
If $M_1$ would reject then $M$ goes to the first initial state of $M_2$ and then 
simulates $M_2$ on the next $k_2$ passes. On reading the end-marker $\sharp$ at the end of the $k$th pass, if $M_2$ would accept then $M$ accepts, otherwise $M$ rejects.
This shows that the language accepted by $M$ is $L_1 \cup L_2$.

We can similarly construct a $k$-pass automaton $M$ accepting the intersection $L_1 \cap L_2$ as follows. The argument is essentially the same as before, but now $M$ has state set
$Q_1 \cup Q_2 \cup \{r\}$ where $r$ is a rejecting state. At the end of pass $k_1$, if $M_1$
would reject, then $M$ goes to the rejecting state $r$ in which it remains while reading the input
for the remaining $k_2$ passes, and then rejects on the final pass. On the other hand, if $M_1$ accepts then, for the remaining $k_2$ passes, $M$ just simulates $M_2$ and, in particular, does whatever $M_2$ at the end of the final pass. It is then clear that the language accepted by $M$ is $L_1 \cap L_2$.

We now consider the nondeterministic case.
So, let $L_i \subseteq \Sigma^*$ be accepted by nondeterministic $k_i$-pass automaton $M_i$ 
for $i = 1,2$. We set $k:=k_1+k_2$. 

Then the $k$-pass nondeterministic automaton $M$ accepting $L_1 \cup L_2$ works as follows. 
$M$ simulates $M_1$ on the first $k_1$ passes and if the computation of $M_1$ would accept, then,
as before, $M$ goes to an accepting state in which it remains while reading the input for the remaining $k_2$ passes, 
and then accepts on the $k$-th pass. 
If not, then $M$ goes to the first initial state of $M_2$ and then simulates $M_2$ on the next 
$k_2$ passes.

Similarly, for the intersection, $M$ accepts exactly if computations of $M_1$ and $M_2$ both accept.
\end{proof}

From the previous results we immediately deduce the following.
\begin{corollary}  
\label{c:boolean}
The class $\mathcal{DM}$ of languages accepted by deterministic multipass automata contains the Boolean closure 
of the class of deterministic context-free languages.  
The class $\mathcal{NM}$ of languages accepted by nondeterministic multipass automata contains the closure of 
context-free languages under union and intersection.
\end{corollary}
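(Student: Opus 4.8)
The plan is to observe that Corollary \ref{c:boolean} follows formally from the three preceding results, with no new machine construction required. Recall that the Boolean closure of a class $\mathcal{C}$ of languages is by definition the smallest class of languages containing $\mathcal{C}$ which is closed under finite union, finite intersection and complementation, and that the closure of $\mathcal{C}$ under union and intersection is the smallest class containing $\mathcal{C}$ and closed under those two operations. So it is enough to exhibit $\mathcal{DM}$ as a class of the first kind containing every deterministic context-free language, and $\mathcal{NM}$ as a class of the second kind containing every context-free language.

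For the first assertion I would argue as follows. By Proposition \ref{p:det-multi}, every deterministic context-free language is a deterministic $1$-pass language and hence lies in $\mathcal{DM}$. By Proposition \ref{p:dmp-complement}, $\mathcal{DM}$ is closed under complementation, and by Proposition \ref{p:union-int} it is closed under union and intersection. Thus $\mathcal{DM}$ is a class containing the deterministic context-free languages and closed under all three Boolean operations; being such, it must contain the smallest such class, which is precisely the Boolean closure $\mathcal{BDC}$.

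For the second assertion the same scheme applies, but only with union and intersection: Proposition \ref{p:det-multi} gives that every context-free language is a nondeterministic $1$-pass language, hence an element of $\mathcal{NM}$, and Proposition \ref{p:union-int} gives that $\mathcal{NM}$ is closed under union and intersection. Consequently $\mathcal{NM}$ contains the closure of the context-free languages under those two operations. Note that we deliberately do not claim closure of $\mathcal{NM}$ under complementation here, which is why the statement asserts only the union--intersection closure for $\mathcal{NM}$ while asserting the full Boolean closure for $\mathcal{DM}$.

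I do not expect a genuine obstacle; the argument is essentially bookkeeping about closure operators. The one place where a word of care is warranted is that Proposition \ref{p:union-int} is phrased for two languages over a common alphabet $\Sigma$, whereas a Boolean combination may in principle involve languages originally presented over different alphabets. This is harmless: one first passes to the union of the alphabets, which changes none of the relevant class memberships (a $k$-pass automaton over $\Sigma$ is also a $k$-pass automaton over any larger alphabet, with the new letters having no transitions), and then the closure arguments above go through verbatim.
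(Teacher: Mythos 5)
Your proposal is correct and follows exactly the paper's route: the paper derives the corollary immediately from Propositions \ref{p:dmp-complement}, \ref{p:det-multi} and \ref{p:union-int}, which is precisely the bookkeeping you spell out. Your remark about enlarging to a common alphabet is a reasonable extra word of care that the paper leaves implicit.
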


We now turn to proving the converse of Corollary \ref{c:boolean}.

\begin{theorem}
\label{t:char-multip}
The class $\mathcal {DM}$ of deterministic multipass languages is the Boolean closure of the class of deterministic context-free languages 
and the class $\mathcal{NM}$ of nondeterministic multipass languages is the class $\mathcal{PCF}$ of poly-context-free languages.
\end{theorem}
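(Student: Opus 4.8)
The plan is to prove the two inclusions that are not yet available, namely $\mathcal{DM}\subseteq\BDC$ and $\mathcal{NM}\subseteq\PCF$; the reverse inclusions are already supplied by Corollary~\ref{c:boolean}, using for $\PCF\subseteq\mathcal{NM}$ only the trivial fact that $\PCF$ is by definition contained in the closure of the context-free languages under union and intersection. The guiding idea is that a run of a $k$-pass automaton $M$ with state set $Q$ and initial state $q_{0}$ decomposes into $k$ consecutive single-pass runs, and that one single pass, once we declare which end-of-pass states are to count as accepting, is \emph{literally} a $1$-pass automaton; so by Proposition~\ref{p:det-multi} its language is deterministic context-free in the deterministic case and context-free in the nondeterministic case. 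I will record the state in which each pass begins and express $L(M)$ as a finite union, over the finitely many sequences of these inter-pass states, of finite intersections of such single-pass languages.

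\emph{The deterministic case.} By Lemma~\ref{l:complete} I may assume $M$ is complete, so that on every input $w$ each pass reads all of $w\sharp$ and ends in a well-defined state. For $1\le j\le k-1$ and $q,p\in Q$, let $B_{j,q,p}$ be the language of the $1$-pass automaton obtained from the pass-$j$ transitions of $M$ with initial state $q$ and with the $\sharp$-transition redirected to $H_{a}$ exactly when the pass-$j$ transition of $M$ on $\sharp$ would move to $p$ (and to $H_{r}$ otherwise); by Proposition~\ref{p:det-multi} this is deterministic context-free, and for fixed $j$ and $q$ the sets $B_{j,q,p}$ ($p\in Q$) partition $\Sigma^{*}$. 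Let $\mathrm{Acc}_{q}$ be the deterministic context-free language of those $w$ accepted by the single-pass automaton built from the pass-$k$ transitions of $M$ started in state $q$. Since $M$ is deterministic, the start states $p_{1},\dots,p_{k-1}$ of passes $2,\dots,k$ are functions of $w$, and, with the convention $p_{0}:=q_{0}$, one checks that
\[
L(M)=\bigcup_{(p_{1},\dots,p_{k-1})\in Q^{k-1}}\bigl(B_{1,p_{0},p_{1}}\cap B_{2,p_{1},p_{2}}\cap\cdots\cap B_{k-1,p_{k-2},p_{k-1}}\cap\mathrm{Acc}_{p_{k-1}}\bigr),
\]
a finite union of finite intersections of deterministic context-free languages, so $L(M)\in\BDC$.

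\emph{The nondeterministic case.} The scheme is the same, with functions replaced by relations and with no appeal to completeness. For $1\le j\le k-1$ and $q,p\in Q$, let $A_{j,q,p}$ be the language of the $1$-pass automaton formed from the pass-$j$ transitions of $M$ with initial state $q$ and with the $\sharp$-transition sending a configuration to $H_{a}$ precisely when $p$ lies in the pass-$j$ $\sharp$-transition set of $M$ at that configuration, and let $A_{k,q,H_{a}}$ be the language of the single-pass automaton simulating pass $k$ of $M$ from $q$; each is context-free by Proposition~\ref{p:det-multi}. Then, with $p_{0}:=q_{0}$,
\[
L(M)=\bigcup_{(p_{1},\dots,p_{k-1})\in Q^{k-1}}\bigl(A_{1,p_{0},p_{1}}\cap A_{2,p_{1},p_{2}}\cap\cdots\cap A_{k-1,p_{k-2},p_{k-1}}\cap A_{k,p_{k-1},H_{a}}\bigr).
\]
Here ``$\supseteq$'' holds because a sequence $(p_{1},\dots,p_{k-1})$ witnessing membership of $w$ in the right-hand side yields single-pass computations that concatenate into one accepting computation of $M$ on $w$, while ``$\subseteq$'' follows by reading off the start states of the successive passes of an accepting computation. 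Finally, a finite union of finite intersections of context-free languages is, by distributing, a finite intersection of unions of context-free languages and hence --- since context-free languages are closed under union --- a finite intersection of context-free languages; thus $L(M)\in\PCF$.

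The pushdown-theoretic part is routine: once a single pass is recast as a $1$-pass automaton, Proposition~\ref{p:det-multi} does the work, and the only care it needs is the usual handling of the empty stack and of the $\sharp$-transition already carried out there. The main obstacle is justifying the two decomposition identities, and above all the ``$\supseteq$'' direction of the nondeterministic case: one must check that per-pass computations required only to be consistent with the chosen inter-pass states $p_{j}$ really can be spliced into a single legal computation of $M$, the subtlety being that the $\sharp$-transition of pass $j$ depends on the entire final configuration of that pass, so each chosen $p_{j}$ must lie among the continuations genuinely reachable at the end of that pass. The last ingredient, easy but essential, is the closure of $\PCF$ under finite unions, which is exactly what confines the nondeterministic side to $\PCF$ rather than only to the larger Boolean closure $\BC$.
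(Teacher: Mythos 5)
Your proof is correct and follows essentially the same route as the paper's: both decompose $L(M)$ as a finite union, over the finitely many possible inter-pass data, of intersections of single-pass languages, each of which is (deterministic) context-free by Proposition~\ref{p:det-multi}, and then conclude by Boolean manipulations. The only cosmetic difference is that the paper indexes the union by full ``accepting profiles'' recording the end-of-pass state and top-of-stack symbol of each pass, whereas you index only by the start states of successive passes and absorb the end-of-pass configuration into the acceptance condition of each single-pass automaton.
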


\begin{proof}
Let $M = ([k],Q, \Sigma, \Gamma, \sharp, \delta, q_0, H_a,H_r)$ be a $k$-pass automaton,
which we assume to be complete if $M$ is deterministic.
If $w \in \Sigma^*$ and $M \vdash w$, define an $M$-\emph{accepting profile} of $w$ as an ordered 
sequence of $k$ triples:
$$ 
p(w) = \left((q_{1,0},\gamma_{1},q_{1,1}), (q_{2,0},\gamma_{2},q_{2,1}), \ldots,(q_{k,0},\gamma_k,q_{k,1})\right)
$$
where $q_{i,0} \in Q$ is the initial state beginning the $i$-th pass and $q_{i,1} \in Q$ (respectively  $\gamma_i \in \gamep$) 
is the control state (respectively  the top letter of the stack) on reading the end-marker on the $i$-th pass in an 
accepting computation of $M$ on $w$ (so that $\delta(k,q_{k,1},\sharp,\gamma_k) = H_a$). 
Note that if $M$ is deterministic there is of course only one such profile.
Let $P(M) = \{p(w): w \in L(M)\}$ denote the set of all such $M$-accepting profiles, and note that $P(M)$ is finite 
since $P(M) \subseteq (Q \times \gamep \times Q)^k$.   

Let $P(M) = \{p_1,p_2, \ldots, p_t\}$ and suppose that 
$$ 
p_i =  \left((q_{1,0}^i, \gamma_{1}^i, q_{1,1}^i), (q_{2,0}^i, \gamma_{2}^i, q_{2,1}^i), \ldots, (q_{k,0}^i, \gamma_{k}^i, q_{k,1}^i)\right)
$$
for $i = 1,2,\ldots,t$.
Let $L_{i,j}$ be the language consisting of all words $w \in \Sigma^*$ accepted by the $1$-pass automaton $M_{i,j}$ which, 
starting in state $q_{j,0}^i$, simulates $M$ on reading $w\sharp$ on the $j$-th pass, and then accepts exactly when reading 
the end-marker in state $q_{j,1}^i$ with $\gamma_{j}^i$ on top of the stack.
Note that for $w \in \Sigma^*$ one has $p(w) = p_i$ if and only if $w \in \bigcap_{j=1}^k L_{i,j}$.

As a consequence,
\begin{equation}
\label{eq:profiles}
L(M) = \bigcup_{i = 1}^{t} \bigcap_{j=1}^k L_{i,j}.
\end{equation}

Since De Morgan's laws allow us to move negations inside unions and intersections
in a  Boolean expression,  the Boolean closure of the class of deterministic context-free 
languages is the same as its closure under union and intersection.  Since the class
of arbitrary context-free languages is closed under unions, we can distribute unions
past intersections and the closure of the class of context-free languages
under unions and intersections is the same as its closure under intersections.
This completes the proof of the theorem.
\end{proof}

Recall \cite{Harrison} that a gsm, a \emph{generalized sequential machine}, 
$$ S = (Q, \Sigma, \Delta, \lambda, q_0) $$
is a deterministic finite automaton with output.  On reading a letter $\sigma \in \Sigma$ in state $q$, the machine  outputs
a word $\lambda(q, \sigma)  \in \Delta^*$. Since $S$ is deterministic it defines a map 
$g \colon \Sigma^* \to \Delta^*$. The convention is that $g(\varepsilon) = \varepsilon$.
For example, a homomorphism $\phi: \Sigma^* \to \Delta^*$ can be defined by a 
one-state gsm:  on reading a letter $\sigma$ the gsm outputs $\phi(\sigma)$.
It is well-known that both deterministic and nondeterministic context-free languages  are closed under inverse gsm mappings.
(If $L$ is context-free, incorporate $S$ into a pushdown automaton $M$ accepting $L$, and when $S$ would output $u$ simulate $M$ on reading $u$.)  Since inverse functions commute with Boolean operations we have:

\begin{proposition}  
\label{p:inverse}
The classes $\DM$ and $\PCF$ are both closed under inverse gsm mappings. In particular,
$\DM$ and $\PCF$ are both closed under inverse monoid homomorphisms. \qed
\end{proposition}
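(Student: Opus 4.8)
The plan is to reduce the statement to the base case of a single (deterministic) context-free language, using the characterizations of Theorem \ref{t:char-multip} together with the trivial fact that preimages commute with all Boolean set operations. Fix a gsm $S = (Q,\Sigma,\Delta,\lambda,q_0)$ and let $g\colon \Sigma^* \to \Delta^*$ be the induced map.

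First I would record the base case: if $L \subseteq \Delta^*$ is context-free (respectively deterministic context-free), then $g^{-1}(L) \subseteq \Sigma^*$ is again context-free (respectively deterministic context-free). This is the classical closure of the (deterministic) context-free languages under inverse gsm mappings \cite{Harrison}: one takes a (deterministic) pushdown automaton $M$ accepting $L$ and builds a machine $N$ whose finite control carries both the current state of $S$ and the current state of $M$; on reading an input letter $\sigma \in \Sigma$ in $S$-state $q$, the machine $N$ updates the $S$-component according to $\lambda$ and simulates $M$ on the finite output word $\lambda(q,\sigma) \in \Delta^*$ via $\varepsilon$-moves, accepting exactly when $M$ would accept. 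Determinism is preserved because $S$ is deterministic and every output word is finite. I would merely cite this rather than reproduce the construction; it is the only substantive ingredient.

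The key (and entirely routine) algebraic observation is that for any map $g$ and any $A,B \subseteq \Delta^*$ one has $g^{-1}(A\cup B) = g^{-1}(A)\cup g^{-1}(B)$, $g^{-1}(A\cap B) = g^{-1}(A)\cap g^{-1}(B)$, and $g^{-1}(\Delta^*\setminus A) = \Sigma^*\setminus g^{-1}(A)$. Hence $g^{-1}$ sends any Boolean combination of languages $L_1,\dots,L_n \subseteq \Delta^*$ to the same Boolean combination of $g^{-1}(L_1),\dots,g^{-1}(L_n) \subseteq \Sigma^*$, and it sends a finite intersection to a finite intersection.

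Finally I would assemble the two cases. If $L \in \DM$, then by Theorem \ref{t:char-multip} $L$ is a Boolean combination of deterministic context-free languages $L_1,\dots,L_n$; by the base case each $g^{-1}(L_i)$ is deterministic context-free, and by the commutation above $g^{-1}(L)$ is the very same Boolean combination of the $g^{-1}(L_i)$, so $g^{-1}(L) \in \BDC = \DM$. If $L \in \PCF = \NM$, write $L = L_1\cap\cdots\cap L_n$ with each $L_i$ context-free; then $g^{-1}(L) = g^{-1}(L_1)\cap\cdots\cap g^{-1}(L_n)$ is a finite intersection of context-free languages, hence $g^{-1}(L)\in\PCF$. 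The statement about inverse monoid homomorphisms is immediate: a homomorphism $\phi\colon \Sigma^*\to\Delta^*$ is computed by the one-state gsm outputting $\phi(\sigma)$ on reading $\sigma$, so it is a special case. There is no real obstacle here: once the classical base case is granted, the argument is pure bookkeeping, and the only point requiring a word of care is the determinism of $N$ in the base-case construction.
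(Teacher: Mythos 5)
Your proposal is correct and follows essentially the same route as the paper: the authors likewise invoke the classical closure of (deterministic) context-free languages under inverse gsm mappings, observe that taking preimages commutes with Boolean operations, and conclude via the characterizations $\DM = \BDC$ and $\NM = \PCF$. Your write-up merely spells out in more detail the base-case simulation and the bookkeeping that the paper leaves implicit.
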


\begin{definition}
Let $\Sigma_i, i =1,2, \ldots,r$ be finite alphabets and let $L_i \subseteq \Sigma_i^*$, $i=1,2, \ldots,r$.   
Let $\Sigma = \bigcup_{i = 1}^{r}\Sigma_i$  and denote by $\pi_i \colon \Sigma^* \to  \Sigma_i^*$   
the monoid homomorphism defined by setting
\[
\pi_i(a) = \begin{cases}
a & \mbox{ if } a \in \Sigma_i\\
\varepsilon & \mbox{ otherwise.}
\end{cases}
\]
We call the language
$$
L = \{w \in \Sigma^*: \pi_i(w) \in L_i,  i=1,2, \ldots,r\}
$$
the \emph{interleaved product} of the languages $L_i$.
\end{definition}

Note that in the definition above there is no hypothesis on how the $\Sigma_i$  overlap.  
If the  alphabets are all disjoint then $L$ is the \emph{shuffle product} 
of the $L_i$.
On the other hand, if the alphabets are all the same then $L$ is the intersection of the $L_i$.   
There does not seem to be a standard name if the overlap of the alphabets is arbitrary.
  
\begin{proposition}
\label{p:interleaved}
The classes $\BDC$ and $\PCF$ are closed under interleaved product.
\end{proposition}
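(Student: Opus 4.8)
The plan is to recognize the interleaved product as a finite intersection of inverse images under monoid homomorphisms, and then to invoke the closure properties already established in this section. Writing $\Sigma = \bigcup_{i=1}^r \Sigma_i$ and letting $\pi_i \colon \Sigma^* \to \Sigma_i^*$ be the projection homomorphisms from the definition, the first step is to record the set-theoretic identity
\[
L = \{w \in \Sigma^* : \pi_i(w) \in L_i \text{ for all } i\} = \bigcap_{i=1}^r \pi_i^{-1}(L_i),
\]
which is immediate on unwinding the definition of $L$: a word lies in $L$ if and only if each of its projections lies in the corresponding $L_i$.

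Next I would treat the two classes in turn. Suppose first that each $L_i \in \BDC$. Each $\pi_i$ is a monoid homomorphism, so by Proposition \ref{p:inverse} (closure of $\DM = \BDC$ under inverse monoid homomorphisms) the language $\pi_i^{-1}(L_i) \subseteq \Sigma^*$ is again in $\BDC$. Since $\BDC$ is closed under finite intersection by Proposition \ref{p:union-int} (equivalently, by the characterization in Theorem \ref{t:char-multip}), it follows that $L = \bigcap_{i=1}^r \pi_i^{-1}(L_i) \in \BDC$. The argument for $\PCF$ is identical: using the $\PCF$ part of Proposition \ref{p:inverse} one gets $\pi_i^{-1}(L_i) \in \PCF$, and then closure of $\PCF$ under finite intersection (immediate from the definition of a poly-context-free language as a finite intersection of context-free languages, together with closure of the context-free languages under inverse homomorphism) gives $L \in \PCF$.

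I do not expect a genuine obstacle here; the entire content of the proposition is the bookkeeping observation that the interleaved product — despite allowing arbitrary overlaps among the $\Sigma_i$ — is nothing more than the intersection of the pullbacks $\pi_i^{-1}(L_i)$. The one point worth stating explicitly in the write-up is precisely why the overlap structure of the alphabets is irrelevant: each condition $\pi_i(w) \in L_i$ is imposed independently on $w \in \Sigma^*$ and the resulting languages are simply intersected, so no compatibility hypothesis on the $\Sigma_i$ is ever used.
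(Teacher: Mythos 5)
Your proposal is correct and follows exactly the paper's own argument: identify the interleaved product with $\bigcap_{i=1}^r \pi_i^{-1}(L_i)$ and then apply Proposition \ref{p:inverse} together with Proposition \ref{p:union-int}. No gaps and no meaningful difference in approach.
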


\begin{proof}
With the notation above, the interleaved product of the  languages $L_i$  is 
$$
\bigcap_{i =1}^{r}  {\pi_i}^{-1}(L_i)
$$
The statement then follows from Proposition \ref{p:inverse} and Proposition \ref{p:union-int}.  
\end{proof}
   
Recall \cite{Harrison} that if $K$ and $L$ are subsets of $\Sigma^*$ then the \emph{left quotient} of $L$ by $K$ is the language
$$
K^{-1} L = \{w \in \Sigma^*: \exists u \in K \mbox{ such that } uw \in L\}.
$$

\begin{proposition}
\label{p:leftquotient}
The classes $\BDC$ and $\PCF$ are closed under left quotients by finite sets.
\end{proposition}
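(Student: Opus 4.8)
The plan is to reduce a finite left quotient to a finite union of \emph{single-word} left quotients and then to realise each single-word quotient as an inverse gsm image, so that everything follows from Propositions \ref{p:inverse} and \ref{p:union-int} (together with the identifications $\DM=\BDC$ and $\NM=\PCF$ of Theorem \ref{t:char-multip}). Write $K=\{u_1,\dots,u_n\}\subseteq\Sigma^*$. Straight from the definition one has $K^{-1}L=\bigcup_{i=1}^{n}\{u_i\}^{-1}L$, where $\{u_i\}^{-1}L=\{w\in\Sigma^*:u_iw\in L\}$. Since $\BDC$ and $\PCF$ are closed under finite unions (Proposition \ref{p:union-int}), it is enough to prove that for a single word $u\in\Sigma^*$ the language $\{u\}^{-1}L$ lies in $\BDC$ (respectively $\PCF$) whenever $L$ does.

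For a fixed $u$, I would introduce the two-state gsm $S_u=(\{p_0,p_1\},\Sigma,\Sigma,\lambda,p_0)$ which on its first input letter $\sigma$ outputs $u\sigma$ and passes to $p_1$, and which from $p_1$ simply copies each letter it reads. The induced map $g_u\colon\Sigma^*\to\Sigma^*$ then satisfies $g_u(w)=uw$ for every nonempty $w$, while $g_u(\varepsilon)=\varepsilon$ by the gsm convention. Consequently $g_u^{-1}(L)$ and $\{u\}^{-1}L$ contain exactly the same nonempty words, and $g_u^{-1}(L)\in\BDC$ (respectively $\PCF$) by Proposition \ref{p:inverse}.

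It only remains to reconcile the two languages at the empty word: $\varepsilon\in g_u^{-1}(L)\iff\varepsilon\in L$, whereas $\varepsilon\in\{u\}^{-1}L\iff u\in L$. Thus $\{u\}^{-1}L$ is obtained from $g_u^{-1}(L)$ by possibly deleting or adjoining the single word $\varepsilon$; concretely $\{u\}^{-1}L=\bigl(g_u^{-1}(L)\cap\Sigma^+\bigr)\cup E$, where $\Sigma^+:=\Sigma^*\setminus\{\varepsilon\}$ and $E=\{\varepsilon\}$ if $u\in L$ while $E=\emptyset$ otherwise. As $\Sigma^+$ and $E$ are regular, hence lie in $\BDC\subseteq\PCF$, and both classes are closed under union and intersection (Proposition \ref{p:union-int}), this stays inside the class and the argument is complete. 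The only point requiring any care is precisely this bookkeeping around $\varepsilon$, forced by the normalisation $g_u(\varepsilon)=\varepsilon$; the mathematical content of the proof is the single application of closure under inverse gsm mappings, with the reduction to one word and the $\varepsilon$-patch being routine.
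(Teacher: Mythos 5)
Your proposal is correct and follows essentially the same route as the paper: reduce to single-word quotients via closure under finite unions, realise $\{u\}^{-1}L$ as an inverse image under the same two-state gsm, and patch the discrepancy at $\varepsilon$ using regular languages and closure under Boolean operations. The only difference is cosmetic — you write the $\varepsilon$-adjustment as $\bigl(g_u^{-1}(L)\cap\Sigma^+\bigr)\cup E$ while the paper splits into three cases adding or removing $\{\varepsilon\}$ — and both are equally valid.
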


\begin{proof}
Let $L \subseteq \Sigma^*$ be in $\BDC$ (respectively $\PCF$) and let $K$ be a finite subset of $\Sigma^*$. Since $K^{-1} L = \bigcup_{u \in K}\{u\}^{-1}L$ and the classes $\BDC$ and $\PCF$ are closed under finite unions (cf. Proposition \ref{p:union-int}), we may suppose that
$K = \{u\}$.

We then define a two-state generalized sequential machine $S$ defining a mapping $g \colon \Sigma^* \to \Sigma^*$ as follows.
On reading a letter $\sigma$ in its initial state, $S$ outputs $u\sigma$ and then goes to its second state, in which it will stay. In its second state $S$ simply outputs $\sigma$ on reading $\sigma$. 

Since $u \in L \Leftrightarrow \varepsilon \in \{u\}^{-1}L$ and $\varepsilon \in L \Leftrightarrow
\varepsilon \in  g^{-1}(L)$ (recall that $g(\varepsilon) = \varepsilon$), it is then clear that  
\[
\{u\}^{-1}L = \begin{cases} 
g^{-1}(L) \cup \{\varepsilon\} & \mbox{ if } u \in L \mbox{ and } \varepsilon \notin L\\
g^{-1}(L) \setminus \{\varepsilon\} & \mbox{ if } u \notin L \mbox{ and } \varepsilon \in L\\
g^{-1}(L) & \mbox{ otherwise.}
\end{cases}
\]
Now, since $\{\varepsilon\}$ and its complement are regular languages in $\Sigma^*$ and
the classes $\BDC$ and $\PCF$ are closed under finite unions/intersections (cf. Proposition \ref{p:union-int}), the result then follows from Proposition \ref{p:inverse}.
\end{proof} 

\section{Group word problems}
Let $G = \langle  X;R\rangle$ be a finitely generated group presentation. We denote by
$\Sigma:= X \bigcup X^{-1}$ the associated \emph{group alphabet}. Then
the \emph{Word Problem} of $G$ (cf. \cite{Anisimov, CCFS}), relative to the given presentation, is the language
$$
\WP(G:X;R):= \{w \in \Sigma^*: w = 1 \mbox{ in } G\} \subseteq \Sigma^*
$$ 
where for $w,w'\in \Sigma^*$ we write ``$w = w'$ in $G$'' provided $\pi(w) = \pi(w')$,
where $\pi \colon \Sigma^* \to G$ denotes the canonical monoid epimorphism.

\begin{theorem} 
\label{t:multipass-w-p}
Let $G = \langle X;R \rangle$ be a finitely generated group. Then whether or not the
associated word problem $\WP(G:X;R)$ is in $\BDC$ (respectively $\PCF$ is
independent of the given presentation. Moreover, if the word problem of $G$ is in $\BDC$ 
(respectively $\PCF$), then every finitely generated subgroup of $G$ also has word problem 
in $\BDC$ (respectively $\PCF$).
\end{theorem}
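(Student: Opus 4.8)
The plan is to realize every word problem in question as the inverse image, under a monoid homomorphism, of a single fixed word problem, and then to invoke the closure of $\BDC = \DM$ and $\PCF$ under inverse monoid homomorphisms established in Proposition \ref{p:inverse} (using Theorem \ref{t:char-multip} to identify $\DM$ with $\BDC$). Both assertions of the theorem come out of the same computation, with presentation-independence being the special case in which the subgroup is all of $G$.

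First I would fix $G = \langle X; R\rangle$, write $\Sigma = X \cup X^{-1}$ for its group alphabet and $\pi \colon \Sigma^* \to G$ for the canonical monoid epimorphism. Let $H \leq G$ be a finitely generated subgroup, let $Y$ be a finite generating set of $H$ (viewed as a subset of $G$), with group alphabet $\Sigma_Y = Y \cup Y^{-1}$ and canonical epimorphism $\pi_Y \colon \Sigma_Y^* \to H$. For each $y \in Y$ choose a word $u_y \in \Sigma^*$ with $\pi(u_y) = y$ (possible since $X$ generates $G \supseteq H$), and let $\bar{u}_y \in \Sigma^*$ be its \emph{formal inverse}, obtained by reversing $u_y$ and replacing each letter by its formal inverse in $\Sigma$; then $\pi(\bar{u}_y) = y^{-1}$. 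Sending $y \mapsto u_y$ and $y^{-1}\mapsto \bar{u}_y$ determines a monoid homomorphism $\phi \colon \Sigma_Y^* \to \Sigma^*$, and by construction $\pi \circ \phi$ agrees, on the generators $\Sigma_Y$, with the composite of $\pi_Y$ and the inclusion $H \hookrightarrow G$; hence the two maps agree on all of $\Sigma_Y^*$. Consequently, for $w \in \Sigma_Y^*$ one has $w = 1$ in $H$ if and only if $\pi(\phi(w)) = 1$ in $G$, i.e. $\phi(w) \in \WP(G:X;R)$, so that for any presentation $H = \langle Y; S\rangle$,
\[
\WP(H:Y;S) = \phi^{-1}\bigl(\WP(G:X;R)\bigr).
\]
By Proposition \ref{p:inverse}, if $\WP(G:X;R) \in \BDC$ (respectively $\PCF$), then so is $\WP(H:Y;S)$; this proves the ``moreover'' part. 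Note also that the displayed identity makes evident that $\WP(H:Y;S)$ does not depend on the chosen relations $S$, since it depends only on $\pi_Y$.

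Presentation-independence for $G$ itself is the case $H = G$: given two presentations $\langle X; R\rangle$ and $\langle X'; R'\rangle$ of $G$, applying the construction above with $Y = X'$ produces a monoid homomorphism $\phi$ with $\WP(G:X';R') = \phi^{-1}(\WP(G:X;R))$, and symmetrically a $\phi'$ with $\WP(G:X;R) = (\phi')^{-1}(\WP(G:X';R'))$, so membership in $\BDC$ (respectively $\PCF$) transfers in both directions. I do not expect a genuine obstacle here; the only point requiring a little care is to check that the formal-inverse convention indeed makes $\phi$ a bona fide monoid homomorphism for which $\pi \circ \phi$ restricts to the correct map into $G$ — once this is verified on generators it extends automatically — and otherwise the argument is entirely formal, resting on the fact that the word problem of a group is, tautologically, an inverse homomorphic image of the word problem of any group containing it.
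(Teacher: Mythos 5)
Your proposal is correct and follows essentially the same route as the paper: both realize $\WP(H:Y;S)$ as $\psi^{-1}(\WP(G:X;R))$ for a monoid homomorphism sending each generator to a representing word (and its formal inverse), then invoke Proposition \ref{p:inverse} on closure under inverse monoid homomorphisms, with presentation-independence obtained by specializing to $H \cong G$.
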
  
  
\begin{proof} 
Suppose that the word problem $\WP(G:X;R)$ is a deterministic (respectively nondeterministic) multipass language.  
Let $H = \langle  Y;S \rangle$ be a finitely generated group and suppose that there is   
an injective group homomorphism $\phi \colon H \to G$. Let $\Sigma = X \bigcup X^{-1}$ and $Z=
Y \bigcup Y^{-1}$ denote the corresponding group alphabets.
For each $y \in Y$ let $w(y) \in \Sigma^*$ be a word representing the group element 
$\phi(y) \in G$ and consider the unique monoid homomorphism $\psi \colon Z^* \to \Sigma^*$ 
satisfying $\psi(y) = w(y)$ and $\psi(y^{-1}) = w(y)^{-1}$ for all $y \in Y$.
Let $w \in Z^*$. Then $w \in \WP(H:Y;S)$ if and only if $\psi(w) \in \WP(G:X;R)$, that is,
$\WP(H:Y;S) = \psi^{-1}(\WP(G:X;R))$. 
From Proposition \ref{p:inverse} we then deduce that $\WP(H:Y;S)$ is deterministic (respectively nondeterministic) multipass. This proves the second part of the statement.
Taking $H$ isomorphic to $G$ gives the first part of the statement.
\end{proof}

We denote by $\BDG$ the class of all finitely generated groups having a deterministic multipass word problem and by $\PG$ the class of all finitely generated groups having a nondeterministic multipass, equivalently poly-context-free, word problem.

Since the word problem of the direct product of two groups given by presentations on disjoint
sets of generators is the shuffle product of the word problems of the two groups, from
Proposition \ref{p:interleaved} we immediately deduce:

\begin{corollary}  
\label{c:product}
The classes $\BDG$ and $\PG$ are closed under finite direct products. 
That is, if two groups $G_1$ and $G_2$ are in $\BDG$ (respectively  $\PG$) 
then $G_1 \times G_2$ is in $\BDG$ (respectively  $\PG$).
\end{corollary}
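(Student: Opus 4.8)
The plan is to reduce the statement to the already–established closure of $\BDC$ and $\PCF$ under interleaved products. First I would present the two groups on \emph{disjoint} generating sets: write $G_i = \langle X_i; R_i \rangle$ for $i = 1,2$ with $X_1 \cap X_2 = \emptyset$, so that a presentation for the direct product is
$G_1 \times G_2 = \langle X_1 \cup X_2;\ R_1 \cup R_2 \cup \{[x,y] : x \in X_1,\ y \in X_2\} \rangle$.
Put $\Sigma_i := X_i \cup X_i^{-1}$ and $\Sigma := \Sigma_1 \cup \Sigma_2$ (a disjoint union), and let $\pi_i \colon \Sigma^* \to \Sigma_i^*$ be the projection monoid homomorphism deleting the letters not in $\Sigma_i$, exactly as in the definition of the interleaved product.

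The key step is to check that for every $w \in \Sigma^*$ one has $w = 1$ in $G_1 \times G_2$ if and only if $\pi_1(w) = 1$ in $G_1$ and $\pi_2(w) = 1$ in $G_2$. This is the standard fact that triviality in a direct product is detected coordinatewise: the coordinate projection $G_1 \times G_2 \to G_i$ sends each generator in $X_i$ to itself and each generator of the other factor to $1$, so the composite $\Sigma^* \to G_1 \times G_2 \to G_i$ factors through $\pi_i$ and carries the image of $w$ to the image of $\pi_i(w)$ in $G_i$. Consequently $\WP(G_1 \times G_2 : X_1 \cup X_2; \dots)$ is precisely the interleaved (here, shuffle) product of $\WP(G_1 : X_1; R_1)$ and $\WP(G_2 : X_2; R_2)$.

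It then remains only to invoke the machinery. Since $G_i \in \BDG$ (respectively $\PG$), Theorem~\ref{t:multipass-w-p} gives that $\WP(G_i : X_i; R_i)$ lies in $\BDC$ (respectively $\PCF$) for this — or indeed any — presentation. Proposition~\ref{p:interleaved} shows the interleaved product is again in $\BDC$ (respectively $\PCF$), so $\WP(G_1 \times G_2)$ is in the relevant class for the chosen presentation, and Theorem~\ref{t:multipass-w-p} removes the dependence on the presentation; induction on the number of factors extends this to any finite direct product. I do not expect a real obstacle: the only point requiring a word of care is the harmless change of ambient alphabet (viewing $\WP(G_i)$ inside $\Sigma^*$ rather than $\Sigma_i^*$), which is what the maps $\pi_i^{-1}$ in the definition of the interleaved product already take care of, and which is in any case subsumed by closure under inverse homomorphisms (Proposition~\ref{p:inverse}) together with closure under intersection (Proposition~\ref{p:union-int}).
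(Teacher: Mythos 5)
Your proposal is correct and follows essentially the same route as the paper: the paper's one-line argument is precisely that the word problem of a direct product presented on disjoint generating sets is the shuffle product of the factors' word problems, so Proposition~\ref{p:interleaved} applies. You merely spell out the coordinatewise-triviality verification and the appeal to Theorem~\ref{t:multipass-w-p}, which the paper leaves implicit.
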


Stallings' example \cite{Stallings} of a finitely generated subgroup of $F_2 \times F_2$  which is 
not finitely presented is the kernel of the homomorphism
$$
F_2 \times F_2 \to \Z
$$
defined by mapping every free generator of each factor of $F_2 \times F_2$ to a fixed generator of the infinite cyclic group $\Z$.
Thus $\BDG$ and $\PG$ contain groups which are \emph{not} finitely presentable.

\begin{corollary}
\label{c:abelian}  
All finitely generated abelian groups are in $\BDG$.
\end{corollary}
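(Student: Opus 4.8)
The plan is to reduce to cyclic groups using the closure of $\BDG$ under direct products. By the structure theorem for finitely generated abelian groups, every such group $G$ is isomorphic to a finite direct product $\Z^n \times (\Z/m_1\Z) \times \cdots \times (\Z/m_r\Z)$ of cyclic groups. Since membership in $\BDG$ is independent of the chosen presentation (Theorem \ref{t:multipass-w-p}) and $\BDG$ is closed under finite direct products (Corollary \ref{c:product}), it suffices to prove that $\Z$ and each finite cyclic group $\Z/m\Z$ lies in $\BDG$.

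For a finite cyclic group $\Z/m\Z = \langle a ; a^m \rangle$, the word problem relative to the group alphabet $\Sigma = \{a, a^{-1}\}$ is the set of words whose exponent sum on $a$ is divisible by $m$. This is recognized by a finite automaton with $m$ states that tracks the running exponent sum modulo $m$, so it is a regular language. Every regular language is deterministic context-free, hence lies in the Boolean closure of the deterministic context-free languages, which by Corollary \ref{c:boolean} (equivalently, by Proposition \ref{p:det-multi} together with Theorem \ref{t:char-multip}) is contained in $\DM = \BDC$. Thus $\Z/m\Z \in \BDG$; in fact the same argument shows that every finite group is in $\BDG$.

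For $G = \Z = \langle a \rangle$ (empty set of relators), the word problem relative to $\Sigma = \{a, a^{-1}\}$ is the language of all words with exponent sum $0$ on $a$. This is a well-known deterministic context-free language: a deterministic pushdown automaton accepts it by maintaining on its stack a pile whose height equals the absolute value of the running exponent sum, together with one bit (recorded in the control state, say) for its sign; on reading $a$ it adds to the pile when the running sum is nonnegative and removes from it otherwise, symmetrically on reading $a^{-1}$, and it accepts exactly when the stack is empty on reaching the end of the input. Hence $\WP(\Z : a;\,)$ is deterministic context-free, so by Corollary \ref{c:boolean} it lies in $\DM = \BDC$, i.e.\ $\Z \in \BDG$.

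Combining these facts, the arbitrary finitely generated abelian group $G \cong \Z^n \times (\Z/m_1\Z) \times \cdots \times (\Z/m_r\Z)$ is a finite direct product of members of $\BDG$ and therefore lies in $\BDG$ by Corollary \ref{c:product}. I do not expect any genuine obstacle here; the only step requiring (entirely routine) verification is the explicit deterministic automaton for the word problem of $\Z$, which is the computation sketched above.
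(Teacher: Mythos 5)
Your proof is correct and follows essentially the same route the paper intends: decompose the finitely generated abelian group as a finite direct product of cyclic groups, observe that each cyclic group has deterministic context-free (indeed regular, in the finite case) word problem so lies in $\BDG$, and then apply closure under finite direct products (Corollary \ref{c:product}), with presentation independence handled by Theorem \ref{t:multipass-w-p}. The verification of the deterministic pushdown automaton for the word problem of $\Z$ is routine, as you note, so there is no gap.
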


\begin{theorem}
\label{t:finite-quotients}
The classes $\BDG$ and $\PG$ are closed under quotients by \emph{finite} normal subgroups. 
\end{theorem}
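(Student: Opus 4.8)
The plan is to reduce the statement to two closure properties already in hand: closure of $\BDC$ and $\PCF$ under finite unions (Proposition \ref{p:union-int}) and under left quotients by finite sets (Proposition \ref{p:leftquotient}). Let $G = \langle X;R\rangle$ be finitely generated with $G \in \BDG$ (respectively $\PG$), put $\Sigma = X \cup X^{-1}$ and $\pi \colon \Sigma^* \to G$ the canonical monoid epimorphism, and let $N \trianglelefteq G$ be finite. Since membership in $\BDG$ / $\PG$ is independent of the chosen presentation (Theorem \ref{t:multipass-w-p}), I am free to present $G/N$ by the presentation whose generating alphabet is again $X$, with $x \in X$ sent to its image in $G/N$; the associated group alphabet is then the very same $\Sigma$. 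With this choice, a word $w \in \Sigma^*$ represents $1$ in $G/N$ exactly when $\pi(w)$ lies in $\ker(G \twoheadrightarrow G/N) = N$, so that
\[
\WP(G/N) = \{ w \in \Sigma^* : \pi(w) \in N \} =: L_N .
\]

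Next I would display $L_N$ as a finite union of left quotients of $\WP(G) := \WP(G:X;R)$ by one-element sets. For each $n \in N$ fix a word $v_n \in \Sigma^*$ with $\pi(v_n) = n$. Then $\{v_n\}^{-1}\WP(G) = \{ w \in \Sigma^* : v_n w \in \WP(G)\} = \{ w : \pi(w) = n^{-1}\}$, and since $N = N^{-1}$ (it is a subgroup), letting $n$ range over $N$ gives
\[
L_N \;=\; \bigcup_{n \in N} \{ w : \pi(w) = n^{-1}\} \;=\; \bigcup_{n \in N} \{v_n\}^{-1}\,\WP(G),
\]
a finite union (as $|N| < \infty$) of languages each obtained from $\WP(G) \in \BDC$ (respectively $\PCF$) by a left quotient by a singleton. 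Applying Proposition \ref{p:leftquotient} to each term and Proposition \ref{p:union-int} to the union shows $L_N = \WP(G/N) \in \BDC$ (respectively $\PCF$), hence $G/N \in \BDG$ (respectively $\PG$).

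The argument is essentially bookkeeping, and there is no serious obstacle; the one point requiring attention is the identification $\WP(G/N) = L_N$, which rests on Theorem \ref{t:multipass-w-p} to license working with a possibly very redundant presentation of $G/N$ on the alphabet $\Sigma$ of $G$ (a generator $x \in X$ may well become trivial, or two generators may become equal, in $G/N$), together with the elementary observation that $w$ represents $1$ in $G/N$ iff $\pi(w) \in N$. One could instead keep a separate group alphabet for $G/N$ and carry $L_N$ back along the evident monoid isomorphism of alphabets using Proposition \ref{p:inverse}, but the phrasing above avoids even that step.
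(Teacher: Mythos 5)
Your proposal is correct and follows essentially the same route as the paper: both present $G/N$ on the alphabet $\Sigma$ of $G$, identify $\WP(G/N)$ with $\{w : \pi(w) \in N\}$, and write this as a finite union of left quotients of $\WP(G)$ by singletons, then invoke Propositions \ref{p:leftquotient} and \ref{p:union-int}. The only cosmetic difference is that the paper phrases the union as $K^{-1}W \cup W$ with $K$ the set of inverses of elements of $N$, while you index the union over $n \in N$ and use $N = N^{-1}$; these are the same decomposition.
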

\begin{proof} Let $G = \langle  X; R \rangle$ be in $\BDG$ (respectively $\PG$) and let $N = \{1 = n_1, n_2, \ldots, n_r\}$ be a finite normal subgroup of $G$.  
Then the quotient group $H = G/N$ admits the presentation 
\begin{equation}
\label{e:pres-H}
H = \langle  X; R \cup N \rangle.
\end{equation} 
Let $\Sigma = X \bigcup X^{-1}$ denote the group alphabet for both $G$ and $H$ and let $W$ be the word problem of $G$.
If $w \in \Sigma^*$ then $w = 1$ in $H$ if and only if $w \in N$ in $G$. 
Let $K$ be the finite set of \emph{inverses} of elements of $N$.  Then $w \in N$ if and only if $w \in K^{-1}W \cup W$, and the result follows from Proposition \ref{p:leftquotient} and Proposition \ref{p:union-int}.
\end{proof}

It is known that both the class of finitely generated groups with context-free 
co-word problem \cite{HRRT} and  the class of groups with 
\pcf word problem \cite{Brough} are closed under finite extensions.  

\begin{theorem}
\label{t:finite-extension}
The classes $\BDG$ and $\PG$ are closed under finite extensions.
\end{theorem}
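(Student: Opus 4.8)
The plan is to reduce the problem to the closure property we already have, namely Theorem~\ref{t:finite-quotients} (closure under quotients by finite normal subgroups) together with Theorem~\ref{t:multipass-w-p} (closure under finitely generated subgroups). So suppose $H$ is a finitely generated group with a normal subgroup $G \in \BDG$ (respectively $\PG$) of finite index $n$. The first step is the standard trick of replacing $G$ by a possibly smaller subgroup that is normal in $H$ and still of finite index, so that we may work with a ``nice'' transversal: let $G_0 = \bigcap_{h \in H} hGh^{-1}$ be the normal core of $G$ in $H$; it has finite index in $H$ and, being a finitely generated subgroup of $G$ (finite index subgroups of finitely generated groups are finitely generated), it lies in $\BDG$ (respectively $\PG$) by Theorem~\ref{t:multipass-w-p}. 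Replacing $G$ by $G_0$, we may assume $G \trianglelefteq H$.

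Next I would set up coordinates. Choose coset representatives $t_1 = 1, t_2, \ldots, t_n$ for $G$ in $H$, and a finite generating set $X$ for $G$; then $Y := X \cup \{t_2, \ldots, t_n\}$ generates $H$. Let $\Sigma = X \cup X^{-1}$ and $\Theta = Y \cup Y^{-1}$. The key structural facts are: for each generator $y \in \Theta$ and each index $i$, there is an index $\rho(y,i)$ and an element $g(y,i) \in G$ with $t_i \, y = g(y,i)\, t_{\rho(y,i)}$; moreover each $g(y,i)$ can be written as a fixed word $u(y,i) \in \Sigma^*$. A word $w = y_1 y_2 \cdots y_m \in \Theta^*$ then satisfies $w = 1$ in $H$ if and only if, tracking the coset index starting from $i_0 = 1$, the index returns to $1$ after reading all of $w$ \emph{and} the accumulated $G$-part equals $1$ in $G$. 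The accumulated $G$-part is obtained by a generalized sequential machine: running a finite automaton with state set $\{1, \ldots, n\}$ that on reading $y$ in state $i$ emits the word $t_i y t_{\rho(y,i)}^{-1}$ rewritten via the fixed words $u(\cdot,\cdot)$, i.e.\ emits $u(y,i)$, and moves to state $\rho(y,i)$. Call the resulting gsm map $g \colon \Theta^* \to \Sigma^*$.

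Then I would check the identity
\[
\WP(H : Y; S) \;=\; g^{-1}\bigl(\WP(G : X; R)\bigr) \;\cap\; R_0,
\]
where $R_0 \subseteq \Theta^*$ is the regular language of words whose associated coset-index walk (governed by the finite automaton above) starts and ends at index $1$; note $R_0$ is regular, hence in $\BDC \subseteq \PCF$, so it is a deterministic (hence also nondeterministic) multipass language. Since $\WP(G:X;R)$ is in $\BDC$ (respectively $\PCF$) by hypothesis, Proposition~\ref{p:inverse} gives that $g^{-1}(\WP(G:X;R))$ is in $\BDC$ (respectively $\PCF$), and then Proposition~\ref{p:union-int} (closure under intersection) finishes the argument. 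The conclusion is that $H \in \BDG$ (respectively $\PG$).

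The main obstacle — really the only subtle point — is verifying the displayed identity correctly, i.e.\ that the gsm faithfully computes the $G$-component of the image of $w$ under the coset bookkeeping, including the case where the index walk strays from $1$ but returns. The cleanest way to see this is to prove by induction on the length of $w$ that, writing $w = h$ in $H$ with $h = g' t_i$ for a unique $g' \in G$ and index $i$, the gsm run on $w$ ends in state $i$ having output a word equal to $g'$ in $G$; one then specializes to $i = 1$. There is a minor issue that the $t_j$'s (and the words $u(y,i)$) are words over $X^{\pm 1}$ and one should fix these word choices at the outset so the gsm output alphabet is exactly $\Sigma$; this is routine. Everything else is an assembly of the closure properties already proved.
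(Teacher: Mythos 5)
Your proof is correct, and the underlying combinatorial content --- Schreier-style coset bookkeeping, rewriting each generator of the big group into a word $u(y,i)$ over the generators of the finite-index subgroup, and accepting exactly when the coset walk returns to the trivial coset and the accumulated subgroup element is trivial --- is the same as in the paper. The difference is in the packaging. The paper builds a new multipass automaton $\widehat{M}$ directly: its states carry the current coset representative, and on each input letter it simulates the subgroup's automaton $M$ on the word $u_{i,x}$, accepting iff the final coset is trivial and $M$ accepts. You instead factor the same bookkeeping through a deterministic gsm $g$ and write $\WP(H) = g^{-1}(\WP(G)) \cap R_0$, then invoke Proposition~\ref{p:inverse} (closure under inverse gsm mappings) and Proposition~\ref{p:union-int} (closure under intersection, noting $R_0$ is regular hence in $\BDC \subseteq \PCF$). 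Your route is more modular and arguably cleaner to verify, since the only thing requiring proof is the displayed identity (which your induction handles correctly); the paper's route avoids the detour through gsm closure but leaves the correctness of $\widehat{M}$ implicit. Two small remarks: your reduction to the normal core is unnecessary --- the identity $t_i y = g(y,i)\, t_{\rho(y,i)}$ with $g(y,i) \in G$ holds for right cosets of any finite-index subgroup, normal or not, which is why the paper skips this step --- though it is harmless since you correctly justify that the core is again in the class via Theorem~\ref{t:multipass-w-p}. And you are right that one must fix the generating set $Y = X \cup \{t_2,\ldots,t_n\}$ and appeal to presentation-independence (Theorem~\ref{t:multipass-w-p}) to conclude; the paper leaves this implicit as well.
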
 

\begin{proof} Let $G = \langle X;R \rangle$ be a finitely generated group with a subgroup
$H   = \langle Y; S \rangle$ of finite index which is in $\BDG$ (respectively $\PG$). 
Note that $H$ must be finitely generated.
Let $\{1 = c_1,c_2,\ldots,c_k \}$ be a set of representatives of the left cosets of $H$ in $G$. 
For each coset representative $c_i$ and each $x \in X^{\pm 1}$ there exist a unique representative $c_j$, $j = j(i,x)$, and a word $u_{i,x}$ in the generators $Y$ of $H$ such that $c_i x = c_j u_{i,x}$.  Let $M$ be a deterministic (respectively nondeterministic) multipass automaton accepting the word problem $\WP(H:Y;S)$ of $H$.  

Define a multipass automaton $\widehat{M}$ accepting the word problem $\WP(G:X;R)$ of $G$ as follows.
The state set of $\wM$ will keep track of the coset representative of the word read
so far. If the coset is $c_i$ and $\wM $ reads a letter $x \in X^{\pm 1}$, then
$\wM$ changes the current coset to $c_j$, $j = j(i,x)$, and simulates $M$ on reading $u_{i,x}$.
At the end of its final pass, $\wM$ accepts exactly if the current coset is $c_1$ and $M$ would accept.  
\end{proof}

We can immediately use the above theorem to prove the following:

\begin{theorem}  
\label{t:smidirect}
Let $G_1$ and $G_2$ be two finitely generated groups in $\BDG$ (respectively  $\PG$).
Suppose that $G_2$ acts on $G_1$ by a \emph{finite} group of automorphisms. 
Then the corresponding semi-direct product $G_1 \rtimes G_2$ is also in $\BDG$ (respectively $\PG$).
\end{theorem}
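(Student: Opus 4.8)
The plan is to produce a finite-index subgroup of $G_1 \rtimes G_2$ which is an honest direct product of two groups already known to lie in $\BDG$ (respectively $\PG$), and then to invoke Corollary \ref{c:product} together with Theorem \ref{t:finite-extension}.

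First I would make the action explicit. Write $\alpha \colon G_2 \to \mathrm{Aut}(G_1)$ for the homomorphism defining the action of $G_2$ on $G_1$, and let $F := \alpha(G_2)$, which is finite by hypothesis. Set $K := \ker \alpha$, a normal subgroup of $G_2$ with $[G_2 : K] = |F| < \infty$. Since $G_2$ is finitely generated and $K$ has finite index, $K$ is finitely generated (Reidemeister--Schreier); hence, by Theorem \ref{t:multipass-w-p}, the group $K$ --- being a finitely generated subgroup of $G_2 \in \BDG$ (respectively $\PG$) --- is itself in $\BDG$ (respectively $\PG$).

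Next I would look at the subgroup $G_1 \rtimes_\alpha K$ of $G_1 \rtimes_\alpha G_2$. Because $\alpha$ is trivial on $K$, this semidirect product is just the direct product $G_1 \times K$, which lies in $\BDG$ (respectively $\PG$) by Corollary \ref{c:product}. At the same time $G_1 \rtimes_\alpha K$ has index $[G_2 : K] < \infty$ in $G_1 \rtimes_\alpha G_2$, so $G_1 \rtimes G_2$ is a finite extension of a group in $\BDG$ (respectively $\PG$), and Theorem \ref{t:finite-extension} completes the argument.

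I do not anticipate a genuine obstacle here; the only point needing a moment's care is verifying that $G_1 \rtimes_\alpha K$ sits inside $G_1 \rtimes_\alpha G_2$ as a subgroup of the stated index and coincides with the direct product $G_1 \times K$, both of which are immediate from the definition of the semidirect product once one notes that the action restricted to $K$ is trivial. (Note that Theorem \ref{t:finite-extension} only requires a finite-index subgroup, not a normal one, so the argument would go through even if $K$ failed to be normal in $G_2$.)
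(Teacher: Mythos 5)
Your proof is correct and follows essentially the same route as the paper: the kernel $K$ of the action is exactly the finite-index subgroup $H \leq G_2$ fixing $G_1$ that the paper uses, and both arguments then pass to $G_1 \rtimes K = G_1 \times K$ and invoke Theorem \ref{t:multipass-w-p}, Corollary \ref{c:product} and Theorem \ref{t:finite-extension}. Your added remarks (finite generation of $K$ via Reidemeister--Schreier, and that normality of $K$ is not needed) are correct but not essential.
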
  
    
\begin{proof}
    Since $G_2$ acts on $G_1$ by a finite group of automorphisms, the subgroup $H \leq G_2$ which fixes all the elements of $G_1$ has finite index in $G_2$
and therefore is in $\BDG$ (respectively  $\PG$) by Theorem \ref{t:multipass-w-p}.
As a consequence, the subgroup $G_1 \rtimes H$ is just $G_1 \times H$ and
has finite index in $G_1 \rtimes G_2$ and the statement follows from Corollary \ref{c:product} and Theorem \ref{t:finite-extension}.
\end{proof}

If $G = \langle X;R\rangle$ is a finitely generated group and $\varphi$ is an automorphism of $G$ then the \emph{mapping torus} of $\varphi$ is the HNN-extension
$$
\langle  G,t;  t x t^{-1} = \varphi(x), x \in X \rangle. 
$$

\begin{corollary} 
\label{c:mapping-torus}
Let $G$ be a group in $\BDG$ (respectively $\PG$) and suppose that $\phi$ is an automorphism of $G$ of \emph{finite order}. 
Then the \emph{mapping torus} of $\phi$ is also in $\BDG$ (respectively $\PG$).
\end{corollary}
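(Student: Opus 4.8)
The plan is to identify the mapping torus with a semidirect product and then quote Theorem~\ref{t:smidirect}. Write $T = \langle G, t;\ txt^{-1} = \varphi(x),\ x \in X\rangle$ for the mapping torus of $\varphi$. Because the defining relations $txt^{-1} = \varphi(x)$ are imposed for $x$ ranging over a generating set $X$ of $G$ and $\varphi$ is a group homomorphism, one gets $tgt^{-1} = \varphi(g)$ for \emph{every} $g \in G$; hence $T$ is precisely the semidirect product $G \rtimes_\varphi \Z$ in which a generator of $\Z = \langle t\rangle$ acts on $G$ by the automorphism $\varphi$. This is the usual description of a mapping torus, and it is the only point that needs a line of checking.

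Next I would use that $\varphi$ has finite order, say $n$: the action homomorphism $\Z \to \mathrm{Aut}(G)$, $k \mapsto \varphi^k$, then factors through the finite group $\Z/n\Z$, so its image is the finite cyclic subgroup $\langle \varphi\rangle \leq \mathrm{Aut}(G)$. Thus $\Z$ acts on $G$ by a \emph{finite} group of automorphisms in the sense of Theorem~\ref{t:smidirect}. Moreover $\Z$, being a finitely generated abelian group, lies in $\BDG$ by Corollary~\ref{c:abelian}, hence also in $\PG$. Since $G \in \BDG$ (respectively $\PG$) by hypothesis, Theorem~\ref{t:smidirect} applied with $G_1 = G$ and $G_2 = \Z$ gives $T = G \rtimes_\varphi \Z \in \BDG$ (respectively $\PG$), as required.

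I do not expect any genuine obstacle: the statement is an immediate corollary of Theorem~\ref{t:smidirect}. Should one wish to argue without that theorem, a second, equally short route is available. Since $\varphi^n = \mathrm{id}$, the element $t^n$ commutes with every element of $G$ and with $t$, so the subgroup $N = \langle G, t^n\rangle$ is the internal direct product $G \times \langle t^n\rangle \cong G \times \Z$; it is normal in $T$ with $T/N \cong \Z/n\Z$, hence of index $n$. By Corollary~\ref{c:product} the finitely generated group $G \times \Z$ is in $\BDG$ (respectively $\PG$), and then Theorem~\ref{t:finite-extension} shows that $T$, being a finite extension of $N$, is in $\BDG$ (respectively $\PG$).
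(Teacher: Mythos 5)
Your first argument is exactly the paper's intended proof: the corollary is stated immediately after Theorem~\ref{t:smidirect} precisely because the mapping torus is $G \rtimes_\varphi \Z$ with $\Z$ acting through the finite cyclic group $\langle\varphi\rangle \leq \mathrm{Aut}(G)$, and $\Z \in \BDG$ by Corollary~\ref{c:abelian}. Your alternative route via the index-$n$ subgroup $G \times \langle t^n\rangle$ is also sound, but it just unwinds the proof of Theorem~\ref{t:smidirect} in this special case, so there is nothing further to add.
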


\section{HNN-extensions and doubles}
We have seen that the classes $\BDG$ and $\PG$ are closed under taking mapping tori of automorphisms of finite order. In this section we prove a much more general result.

\begin{theorem} 
\label{t:HNN}
Let $G = \langle  X; R \rangle$ in $\BDG$ (respectively $\PG$) and $S$ a subgroup of $G$ 
of \emph{finite index}. Suppose also that $\varphi \colon G \to G$ is an automorphism of finite order such that $\varphi(S) = S$. 
Then the HNN-extension
$$
H = \langle  G,t; tst^{-1} = \varphi(s), s \in S\rangle 
$$
is again in $\BDG$ (respectively $\PG$). 
\end{theorem}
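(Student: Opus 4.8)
The plan is to realize $H$ as a finite extension of a "multiple mapping torus'' built from $G$ and the finite cyclic group generated by $\varphi$, and then reduce to closure properties we already have. Let $m$ be the order of $\varphi$, so $\varphi^m = \mathrm{id}$. Write $[G:S] = n$ and let $c_1 = 1, c_2, \ldots, c_n$ be right coset representatives of $S$ in $G$; since $\varphi(S) = S$, the automorphism $\varphi$ permutes the cosets $Sc_i$, so there is a well-defined bookkeeping of which coset an element lies in that is compatible with both left multiplication by generators of $G$ and with applying $\varphi$. The key structural observation is that an element $h \in H$ can, by the normal form for HNN-extensions (Britton's Lemma), be written so that the only constraint on when a $t^{\pm 1}$ letter may be cancelled is a membership condition in $S$ together with the rule $t s t^{-1} = \varphi(s)$; because $\varphi$ has finite order, the ``$t$-depth'' of a reduced word is bounded modulo $m$ by the coset information, and this is exactly the kind of finite-state data that a multipass automaton can carry in its state set (as in the proof of Theorem~\ref{t:finite-extension}).

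Concretely, I would first set up an intermediate group. Let $K = \langle G, t ; t x t^{-1} = \varphi(x), x \in X\rangle$ be the mapping torus of $\varphi$, which lies in $\BDG$ (respectively $\PG$) by Corollary~\ref{c:mapping-torus}. The group $H$ has the same generators $X \cup \{t\}$ as $K$ but fewer relations — in $H$ conjugation by $t$ is only required to agree with $\varphi$ on $S$, not on all of $G$ — so there is a natural surjection $K \twoheadrightarrow H$; however $H$ is not a quotient of $K$ by a normal subgroup we control directly, so I instead work with $H$ head-on. The word problem $\WP(H : X \cup\{t\}; \ldots)$ is accepted by a multipass automaton $\widehat M$ constructed as follows: $\widehat M$ reads the input word $w$ over $(X \cup X^{-1} \cup \{t, t^{-1}\})^*$ and maintains in its finite control (i) the current coset $Sc_i$ of the "$G$-part accumulated at the current $t$-level'', and (ii) the current $t$-exponent reduced modulo $m$ together with the sign pattern needed to detect Britton reductions; whenever a $t^{-1} \cdots t$ pinch is completed with the enclosed $G$-syllable lying in $S$ (a condition $\widehat M$ can check because it is running a multipass automaton for $\WP(G)$ — or rather for the family of cosets of $S$, which is still in the class by Theorem~\ref{t:finite-extension} applied to $S \le G$) it applies the appropriate power of $\varphi$, which only changes the bounded modular data and the coset label. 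At the end of its passes $\widehat M$ accepts iff all $t$-letters have cancelled (net $t$-exponent $0$ with balanced pinches) and the resulting element of $G$ is trivial, which is exactly a conjunction of finitely many multipass-acceptable conditions, hence multipass-acceptable by Proposition~\ref{p:union-int}.

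The cleanest way to package the above — avoiding a hand-rolled automaton — is the following reduction, which I would actually present. Because $\varphi(S) = S$ and $[G:S] < \infty$, the subgroup $S$ is $\varphi$-invariant and hence $N := \bigcap_{g \in G} gSg^{-1} \cap \bigcap_{i=0}^{m-1}\varphi^i(\text{that core})$ — i.e. the normal core of $S$ made $\varphi$-invariant — has finite index in $G$ and is both normal in $G$ and $\varphi$-invariant. Replacing $S$ by $N$ changes $H$ only up to finite extension (a standard argument: the HNN-extension over the larger associated subgroup $S$ sits inside a finite extension of the HNN-extension over $N$, using that $\varphi$ permutes the finitely many cosets), and by Theorem~\ref{t:finite-extension} it suffices to treat the case where $S = N$ is normal and $\varphi$-invariant. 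But then $H$ is an ascending-union-free, in fact a genuine, HNN-extension with normal associated subgroup, so $H$ maps onto the mapping torus of the induced finite-order automorphism $\bar\varphi$ of the finite-index... — more to the point, $H$ is a finite extension of $G \rtimes_{\psi} \mathbb{Z}$-type pieces glued along $N$, and one checks directly that the word problem is the conjunction of (a) a regular condition tracking $t$-height and cosets mod $m$ and (b) finitely many copies of $\WP(G)$ pulled back along monoid homomorphisms, hence in $\BDC$ (respectively $\PCF$) by Propositions~\ref{p:inverse} and~\ref{p:union-int}.

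The main obstacle is step two of the reduction — showing that enlarging the associated subgroup from a $\varphi$-invariant normal $N$ back up to $S$ costs only a finite extension, and dually that the multipass automaton can correctly detect Britton-reductions in real time. The subtlety is that a naive left-to-right scan cannot know whether a given $G$-syllable sitting between a $t^{-1}$ and a $t$ will ultimately be ``exposed'' for a pinch after inner cancellations; this is precisely where the \emph{multipass} feature earns its keep, since on a later pass the automaton can re-examine the word with the coarser bracketing already computed, and finitely many passes suffice because the nesting depth of $t$-pinches that matter is bounded once one works modulo $m$ and modulo the finitely many cosets of $N$. Making this bound explicit and verifying that each pass is a legitimate (deterministic, in the $\BDG$ case) pushdown computation is the technical heart of the argument.
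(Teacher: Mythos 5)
There is a genuine gap, and you have in fact flagged it yourself: both of the things you defer as ``the technical heart'' are precisely the points the argument must supply, and neither of your proposed routes closes them. First, your reduction to a normal, $\varphi$-invariant associated subgroup $N$ does not work: enlarging the associated subgroup from $N$ back up to $S$ adds relations, so $H$ is a proper quotient of the HNN-extension over $N$, not (virtually) a subgroup of it, and in general the two groups are not commensurable. (Already for $G=\Z$, $S=G$, $\varphi=\mathrm{id}$, $N=2\Z$ one gets $H\cong\Z^2$ while the HNN-extension over $N$ has $\Z/2 * \Z$ as a quotient, hence is not virtually abelian.) Second, your claim that ``the nesting depth of $t$-pinches that matter is bounded once one works modulo $m$ and modulo the finitely many cosets'' is false: words such as $t^n s\, t^{-n}\,(\,t^n \varphi^n(s)^{-1} t^{-n}\,)$ exhibit unbounded nesting, and Britton reduction is intrinsically a Dyck-matching problem. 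Consequently you cannot keep ``the coset of the $G$-part accumulated at the current $t$-level'' in the finite control: at any moment there may be linearly many unmatched $t$-letters, each carrying its own pending coset datum that must be recovered when that letter is eventually matched. Relatedly, testing whether an enclosed syllable lies in $S$ by ``running a multipass automaton for the cosets of $S$'' on each syllable is not an operation a single automaton can iterate over unboundedly many syllables, and the syllable to be tested at a pinch is not the literal input subword but the product of contributions left over from previously cancelled inner pinches.

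The two ideas that repair this, and which your proposal is missing, are: (i) pass to a characteristic finite-index subgroup $C\leq S$ with finite quotient $K=G/C$ and epimorphism $\psi\colon G\to K$; since $u\in S$ if and only if $\psi(u)$ lies in the image $J$ of $S$, the membership test needed for each Britton pinch becomes a computation in the fixed finite group $K$ (and $\varphi$ descends to $K$ because $C$ is characteristic); and (ii) use the \emph{stack}, on a single dedicated first pass, to store one symbol $(t^{\epsilon},k)$ for each currently unmatched $t$-letter, where $k\in K$ accumulates the $\psi\circ\varphi^{m_i}$-image of everything read since that letter, popping on a pinch and multiplying the released value into the symbol below. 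The exponent $m_i$ of the twist is tracked modulo the order of $\varphi$ in the finite control, which is the one place where finite order of $\varphi$ is used as you intended. All $t$'s cancel if and only if the stack is empty at the end-marker, after which the remaining passes simulate the given automaton for $\WP(G)$ on the twisted word $u\varphi^{m_1}(u_1)\cdots\varphi^{m_n}(u_n)$. Your finite-control bookkeeping of cosets and of the $t$-exponent mod $m$ is the right instinct for the twisting, but without (i) and (ii) the construction does not yield a legitimate pushdown computation on any pass.
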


\begin{proof}
First recall that a subgroup $S$ of finite index contains a characteristic subgroup $C$
of finite index. Let $G/C =: K = \{1_K = k_1, k_2,\ldots,k_r\}$ be the corresponding finite quotient 
group and let $\psi \colon G \to K$ be the associated epimorphism. 
Since $C$ is characteristic, $\varphi$ induces an automorphism $\overline{\varphi} \colon K \to K$.
Finally, let $J:=\psi(S) \leq K$ denote the image of $S$ in $K$.
By Britton's Lemma, if a word $w = 1$ in $H$, then all occurrences of the
stable letter $t$ must cancel by successive $t$-reductions. 
Now for $u \in G$ we have $t^{\epsilon} u t^{-\epsilon} = \varphi^{\epsilon}(u)$ (where $\epsilon = \pm 1$)
if and only if $u \in S$, or equivalently, if and only if $\psi(u) \in J$.

Let $\Sigma(X):= X \bigcup X^{-1}$ be the group alphabet for the base group $G$ and
let $M$ be a multipass automaton accepting $\WP(G:X;R) \subseteq \Sigma(X)^*$, say with stack alphabet $\Delta$. 

We now describe a new multipass automaton $\widehat M$ with input alphabet 
$\Sigma:= \Sigma(X) \bigcup \{t,t^{-1}\}$ and stack alphabet $\Delta \bigcup (\{t, t^{-1}\} \times K)$,
which will accept the word problem of $H$. 

An input word has the form $w = u t^{\epsilon_1} u_1t^{\epsilon_2} \cdots  u_{n-1} t^{\epsilon_{n}}u_{n}$, where $u,u_i \in \Sigma(X)^*$ and $\epsilon_i = \pm 1$ for $i=1,2,\ldots,n$.

The first pass of $\widehat{M}$ is to decide whether or not all the $t$'s cancel.
Let $p \geq 1$ denote the order of the automorphism $\varphi$. As $\widehat{M}$ proceeds from left to right, it will count the algebraic sums $m_i:= \epsilon_1 + \epsilon_2 + \cdots + \epsilon_i$ mod $p$ for $i=1,2,\ldots,n$.

To begin, $\widehat{M}$ ignores $u$, that is, any letters from the base group until the first $t^{\epsilon_1}$, and then adds $(t^{\epsilon_1}, 1_K)$ to the stack and counts $\epsilon_1$ mod $p$. Next, $\widehat{M}$ keeps track of the element $k:= \psi(\varphi^{\epsilon_1}(u_1)) = \overline{\varphi}^{\epsilon_1}(\psi(u_1)) \in K$.
There are now two cases.
If $k \notin J$ or if $\epsilon_2 = \epsilon_1$, then $\widehat{M}$ replaces 
$(t^{\epsilon_1}, 1_K)$ with $(t^{\epsilon_1}, k) (t^{\epsilon_2}, 1_K)$ on the stack.  
If $k \in J$ and $\epsilon_2 = - \epsilon_1$ then $\widehat{M}$ erases $(t^{\epsilon_1}, 1_K)$ from the stack which then remains empty.  In either case, $\widehat{M}$ now counts $m_2=\epsilon_1 + \epsilon_2$ mod $p$.

Suppose that the machine has just finished processing the $i$-th $t$-symbol, so that the
count is $m_i = (\epsilon_1 + \epsilon_2 + \cdots + \epsilon_i)$ mod $p$, 
and the stack is either empty or the top symbol on the stack is $(t^{\epsilon}, k)$.
If the stack is empty, then as before, $\widehat M$ reads the base letters until the next $t^{\eta}$-symbol is encountered  and then puts $(t^{\eta}, 1_K)$ on the stack. If there is $(t^{\epsilon}, k)$ on top of the stack, the machine keeps track of the the image $k':= \psi(\varphi^{m_i}(u))= \overline{\varphi}^{m_i}(\psi(u)) \in K$ of the product $u$ of generators in the base from that point until the next occurrence, say $t^\eta$, of a $t$-symbol. 

If $\eta = -\epsilon$ and $kk' \in J$, then $\widehat M$ does the following.
It removes $(t^{\epsilon}, k)$ on top of the stack and, if the stack is empty then $\widehat M$ continues processing the input; if there is now a $(t^\beta, k'')$ on the top of the stack,
then $\widehat M$ replaces that symbol by $(t^\beta, k''kk')$.

On the other hand, if $\eta = \epsilon$ or $kk' \notin J$ then $\widehat M$ erases $(t^{\epsilon}, k)$ on top of the stack and replaces it by $(t^{\epsilon}, kk')(t^{\eta}, 1_K)$.

On reading the end-marker, all the $t$'s have cancelled if and only if the stack is empty.
If the stack is nonempty $\widehat{M}$ goes to a rejecting state that will reject on reading the
end-marker on the final pass.  
If the stack is empty, since $t^\epsilon s t^{-\epsilon} = \varphi^\epsilon(s)$ in $G$ for all $s \in S$ and $\epsilon = \pm 1$, then after cancelling all $t$-symbols, we have 
$w = u \varphi^{m_1}(u_1)\varphi^{m_2}(u_2)\cdots \varphi^{m_n}(u_n)$ in $G$.
Then $\widehat{M}$ ignores $t$-symbols and just simulates $M$ to check that the word 
$u\varphi^{m_1}(u_1)\varphi^{m_2}(u_2)\cdots \varphi^{m_n}(u_n)$ is equal to the identity in $G$.  
\end{proof}

\begin{definition}
\label{d:double-twisted}
Let $G$ (respectively  $\overline{G}$) be a group and let $S \leq G$ be a subgroup.
Let also $\varphi \colon G \to G$ be an automorphism of finite order such that $\varphi(S) = S$.
Suppose there exists an isomorphism $g \mapsto \overline{g}$ of $G$ onto $\overline{G}$
and denote by $\overline{S} \leq \overline{G}$ the image of the subgroup $S$. 
Consider the free product with amalgamation
$$
D := \langle  G * \overline{G}; \varphi(s) = \overline{s},  s \in S\rangle.
$$
If $\varphi$ is the identity of $G$, then $D$ is called a \emph{double} of $G$ over the subgroup $S$. In general, if $\varphi$ is not the identity, then $D$ is called a \emph{double} of $G$ over the subgroup $S$ \emph{twisted} by the automorphism $\varphi$. 
\end{definition}

Note that for a double we require that the amalgamation is induced by the global isomorphism between $G$ and $\overline{G}$.

\begin{theorem} 
\label{t:double}
Let $G$ be a group in $\BDG$ (respectively $\PG$) and suppose that $S$ is a subgroup of finite index in $G$. Let also $\varphi \colon G \to G$ be an automorphism of finite order such that $\varphi(S) = S$. Then $D := \langle  G * \overline{G}; \varphi(s) = \overline{s},  s \in S\rangle$ is again in $\BDG$ (respectively $\PG$).
\end{theorem}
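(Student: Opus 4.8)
The strategy is to realise $D$ as a finitely generated subgroup of the HNN-extension already handled in Theorem \ref{t:HNN} and then to invoke closure under finitely generated subgroups. Set
\[
H := \langle G, t;\ tst^{-1} = \varphi(s),\ s \in S \rangle .
\]
Because $S$ has finite index in $G$, the automorphism $\varphi$ has finite order, and $\varphi(S) = S$, Theorem \ref{t:HNN} gives $H \in \BDG$ (respectively $\PG$). By Theorem \ref{t:multipass-w-p} every finitely generated subgroup of a group in $\BDG$ (respectively $\PG$) again lies in that class, so it suffices to embed $D$ as a finitely generated subgroup of $H$.

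To produce such an embedding, identify $\overline{G}$ with $G$ via the given isomorphism $g \mapsto \overline{g}$ (so that $\overline{S}$ becomes $S$), and let $\Phi \colon G * \overline{G} \to H$ be the homomorphism determined by $\Phi|_G = \mathrm{id}_G$ and $\Phi(\overline{g}) = t g t^{-1}$ for $g \in G$. For every $s \in S$ one has $\Phi(\overline{s}) = t s t^{-1} = \varphi(s) = \Phi(\varphi(s))$ in $H$, so $\Phi$ kills the amalgamating relators $\varphi(s)\overline{s}^{\,-1}$ and hence descends to a homomorphism $\Phi \colon D \to H$ whose image is the subgroup $D' := \langle G, t G t^{-1} \rangle$. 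If $X$ is a finite generating set of $G$, then $X \cup t X t^{-1}$ generates $D'$, so $D'$ is finitely generated; once $\Phi$ is shown to be injective we obtain $D \cong D' \le H$ and the theorem follows.

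The heart of the argument is therefore the injectivity of $\Phi$, equivalently the identification of $D'$ with the free product $G *_S t G t^{-1}$ carrying the correct (twisted) amalgamation. I would establish this with Bass--Serre theory: let $H$ act on the Bass--Serre tree $T$ of its HNN splitting, fix a vertex $v$ with stabiliser $G$, so that $tv$ has stabiliser $t G t^{-1}$ and the edge $e$ joining them has stabiliser $G \cap t G t^{-1}$, which Britton's Lemma identifies with $S$. The subgroup $D'$ acts on the subtree it spans with quotient a single edge, the two endpoints lying in distinct $D'$-orbits because $t \notin D'$ (both $G$ and $t G t^{-1}$ lie in the kernel of the exponent-sum map $H \to \Z$, $t \mapsto 1$, whereas $t$ does not). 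Bass--Serre theory then yields $D' = G *_S t G t^{-1}$, and tracing through the identifications shows that this amalgam is precisely the one defining the $\varphi$-twisted double $D$. Alternatively one can compare normal forms directly: a reduced word $g_0 \overline{h_1} g_1 \overline{h_2} \cdots$ in $D$ is sent to $g_0 (t h_1 t^{-1}) g_1 (t h_2 t^{-1}) \cdots$, and reducedness of the $D$-word forbids any $t$-pinch, so Britton's Lemma forces the $H$-image to be nontrivial. The main obstacle is exactly this matching of normal forms and the verification that the amalgam recovered is the twisted double rather than an untwisted or differently twisted variant; all the remaining content is the direct application of Theorems \ref{t:HNN} and \ref{t:multipass-w-p}.
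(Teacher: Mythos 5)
Your proof is correct and follows essentially the same route as the paper: realise $D$ as a finitely generated subgroup of the HNN-extension $H$ of Theorem \ref{t:HNN} and then invoke closure under finitely generated subgroups (Theorem \ref{t:multipass-w-p}). The paper merely asserts the embedding as ``a general fact'' and writes it as $\overline{g} \mapsto tg^{-1}t^{-1}$ (apparently a typo, since that formula is not a homomorphism), whereas your map $\overline{g} \mapsto tgt^{-1}$ is the correct one and your Bass--Serre/Britton argument supplies the injectivity details the paper omits.
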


\begin{proof} Note that $D$ embeds in the HNN-extension 
$$
H = \langle  G,t; t s t^{-1} = \varphi(s), s \in S\rangle
$$ 
via the map defined by $g \mapsto g$ and $\overline{g} \mapsto tg^{-1}t^{-1}$ for all $g \in G$ and 
$\overline{g} \in \overline{G}$. This is a general fact that does not require any particular hypothesis on $G$ or $S$.  But if $G$ is in $\BDG$ (respectively $\PG$) and $S$ is a subgroup of finite index, then $H$ is in $\BDG$ (respectively $\PG$) by Theorem \ref{t:HNN}. 
Since $D$ is isomorphic to a finitely generated subgroup of $H$, we deduce from Theorem \ref{t:multipass-w-p} that $D$ is also in $\BDG$ (respectively $\PG$).
\end{proof}

\section{Parikh's theorem and some examples of groups not in $\PG$}

We start by recalling some known preliminary facts. 
 
Let $\Sigma = \{a_1,a_2,\ldots,a_r\}$. Then $\Sigma^*$ is the \emph{free monoid of rank $r$}.
The free \emph{commutative} monoid of rank $r$ is $\mathbb{N}^r$ with vector addition.
The natural \emph{abelianization map} $\pi \colon \Sigma^* \to \mathbb{N}^r$ is defined by
$\pi(a_i) = \boldsymbol{e}_i$, where the vector $\boldsymbol{e}_i \in \mathbb{N}^r$ has $1$ in the $i$-th coordinate and $0$ elsewhere, for all $i=1,2,\ldots,r$.  
Note that if $w \in \Sigma^*$ then the $i$-th coordinate of $\pi(w)$ equals the number
of occurrences of the letter $a_i$ in $w$.
In formal language theory the abelianization map is called the \emph{Parikh mapping}
because of a remarkable theorem of Parikh \cite{Parikh, P2, Harrison} which we now review.  

\begin{definition} 
\label{d:linear} 
A \emph{linear} subset of $\mathbb{N}^r$ is a set of the form
\begin{equation}
\label{e:linear}
S = S(\boldsymbol{v}_0;\boldsymbol{v}_1, \boldsymbol{v}_2, \ldots, \boldsymbol{v}_m) := \{\boldsymbol{v}_0 + n_1 \boldsymbol{v}_1 + \dots + n_m \boldsymbol{v}_m: n_1,n_2 \ldots, n_m \in \mathbb{N}\} 
\end{equation}
where $m \geq 0$ and $\boldsymbol{v}_i \in \mathbb{N}^r$ for all $i=0,1,\ldots,m$.
A \emph{semi-linear} subset of $\mathbb{N}^r$ is a finite union of linear subsets of $\mathbb{N}^r$.
\end{definition}

\begin{theorem}[Parikh]  
Let $L \subseteq \Sigma^*$ be a context-free language. 
Then $\pi(L) \subseteq \mathbb{N}^r$ is semi-linear and there is a regular language $R  \subseteq \Sigma^*$ such that $\pi(R) = \pi(L)$.
\end{theorem}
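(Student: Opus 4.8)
The plan is the classical derivation-tree and pumping argument. I will prove first that $\pi(L)$ is semilinear; the existence of a regular $R$ with $\pi(R)=\pi(L)$ then follows at once, since a linear set $S(\boldsymbol{v}_0;\boldsymbol{v}_1,\ldots,\boldsymbol{v}_m)$ is the Parikh image of the regular language $a^{\boldsymbol{v}_0}(a^{\boldsymbol{v}_1}+\cdots+a^{\boldsymbol{v}_m})^{*}$, where $a^{\boldsymbol{v}}$ denotes $a_1^{v_1}a_2^{v_2}\cdots a_r^{v_r}$ for $\boldsymbol{v}=(v_1,\ldots,v_r)$, and a finite union of regular languages is regular while $\pi$ preserves unions. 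To prove semilinearity of $\pi(L)$, fix a context-free grammar $G$ for $L$; passing to Chomsky normal form (and adjoining the zero vector to the final answer separately if $\varepsilon\in L$) we may assume derivation trees are finitely branching with all leaves carrying terminals.

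Call a derivation tree \emph{reduced} if no nonterminal is repeated along any path from the root to a leaf. A reduced tree has height at most the number of nonterminals of $G$, hence boundedly many nodes, so there are only finitely many reduced trees; in particular the set of Parikh vectors of yields of reduced trees rooted at the start symbol is finite. Analogously, for a nonterminal $A$ a \emph{pump at $A$} is a tree with root labelled $A$ having exactly one further node (a leaf) labelled $A$, all of whose remaining leaves are terminal and in which no nonterminal is repeated along any root-to-leaf path apart from this forced pair; pumps are likewise bounded in size, so there are finitely many, and for each set $U$ of nonterminals the pumps at nonterminals of $U$ that use only nonterminals of $U$ contribute a finite set of \emph{period vectors} (the Parikh vector of the terminal frontier). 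Splicing a pump at $A$ into a derivation tree at an occurrence of $A$ yields a valid derivation tree whose yield has Parikh image increased by the corresponding period vector, and this may be iterated.

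The core is the identity
$$
\pi(L) = \bigcup_{(t,U)}\Bigl(\pi(\mathrm{yield}(t)) + \bigl\{\textstyle\sum_j n_j\boldsymbol{p}_j : n_j\in\N\bigr\}\Bigr),
$$
where $(t,U)$ runs over pairs consisting of a reduced tree $t$ rooted at the start symbol together with a set $U$ of nonterminals such that every nonterminal of $t$ lies in $U$ and, starting from $t$, every nonterminal of $U$ can be made to occur by finitely many pump-splicings using only nonterminals of $U$, and where $\boldsymbol{p}_1,\boldsymbol{p}_2,\ldots$ enumerate the period vectors of pumps at nonterminals of $U$ using only nonterminals of $U$. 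The inclusion "$\supseteq$" is the splicing construction above (first make all of $U$ occur, then splice in the $\boldsymbol{p}_j$ with the prescribed multiplicities). For "$\subseteq$", take $w\in L$ with a derivation tree $T$, let $U$ be the set of nonterminals occurring in $T$, and repeatedly excise pumps from $T$ until a reduced tree $t$ remains; then $\pi(w)$ equals $\pi(\mathrm{yield}(t))$ plus a sum of period vectors drawn from the $U$-pumps, and $U$ satisfies the stated condition because the excisions can be undone. I expect the bookkeeping of nonterminal sets to be the main obstacle: one must break an arbitrary (possibly non-reduced) excised sub-pump into genuine pumps whose nonterminal sets stay within $U$, and verify that each pump is attached at a nonterminal that is actually reachable within $U$, so that the decomposition is legitimately reversible — this is handled by choosing the order of excision carefully and inducting on $\lvert U\setminus\mathrm{var}(t)\rvert$, the point being that once every nonterminal of $U$ has been unlocked the remaining contributions form exactly the non-negative integer span of the $U$-period vectors. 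Granting the identity, its right-hand side is a finite union of linear subsets of $\N^r$, hence semilinear, which completes the proof.
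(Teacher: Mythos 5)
You should know at the outset that the paper does not prove this theorem: it is quoted as Parikh's theorem and the reader is referred to \cite{P2} for a proof, so there is no in-paper argument to compare against except the remark immediately following the statement, where the authors construct, for a given semi-linear set, a regular language $V$ with that Parikh image. Your first paragraph reproduces exactly that construction, and that half of your write-up is fine.

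The semilinearity half, however, contains a genuine error: your key identity is false as stated, because the base point of each linear set is wrong. The trouble is the ``unlocking'' phase you describe in the $\supseteq$ direction. If $A\in U$ does not occur in the reduced tree $t$, then before any pump at $A$ can be spliced in you must first splice in other pumps to make $A$ appear, and those splicings add their own period vectors to the Parikh image. Consequently the point you actually realize is $\pi(\mathrm{yield}(t))+(\text{unlocking cost})+\sum_j n_j\boldsymbol{p}_j$, not $\pi(\mathrm{yield}(t))+\sum_j n_j\boldsymbol{p}_j$, and your right-hand side overshoots $\pi(L)$. Concretely, take the grammar with productions $S \to a \mid bSAc$ and $A \to d \mid eAe$ (converting to Chomsky normal form changes nothing relevant). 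Every word of the language containing the letter $e$ must contain the letter $b$, since $A$ is reachable only through the production $S\to bSAc$. Yet for the reduced tree $t$ with yield $a$ and $U=\{S,A\}$, your admissibility condition is satisfied ($A$ can be made to occur by splicing the $S$-pump built from $S\to bSAc$, $A\to d$), the $A$-pump $A\to eAe$ contributes the period $\pi(ee)$, and so your linear set contains $\pi(a)+\pi(ee)=\pi(aee)$ --- a vector with two $e$'s and no $b$, which is not the Parikh image of any word of $L$. The standard repair is to index the union by subsets $U$ of nonterminals and to take as base trees only derivation trees (of height bounded in terms of $|U|$) whose nonterminal set is \emph{exactly} $U$; then every $U$-pump can be spliced directly with no unlocking cost, and the real work of the proof --- which your sketch does not address --- is to show that every derivation tree with nonterminal set $U$ can be excised down to such a bounded base tree \emph{without ever deleting the last occurrence of some nonterminal of $U$}. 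Until that is done, the decomposition is not established.
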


Thus abelianizations cannot distinguish between regular languages and context-free languages!
It is this property that one can use to show that certain groups are not in $\PG$.
A clear proof of Parikh's theorem is found in \cite{P2}. 
 
It is easy to show that any semi-linear subset of $\mathbb{N}^r$ is the image of a regular language under the abelization map $\pi$. Indeed, in the notation of Definition \ref{d:linear},
let $\boldsymbol{v}_i = (v_{i,1} v_{i,2},\ldots,v_{i,r})$ for $i=0,1,\ldots,m$.
Consider the regular language 
$$
V:= (a_{1}^{v_{0,1}} a_{2}^{v_{0,2}} \cdots a_{r}^{v_{0,r}}) (a_{1}^{v_{1,1}}a_{2}^{v_{1,2}} \cdots a_{r}^{v_{1,r}})^* \cdots (a_{1}^{v_{m,1}} a_{2}^{v_{m,2}} \cdots a_{r}^{v_{m,r}})^*.
$$
Looking back to \eqref{e:linear}, it is clear that $\pi(V) = S$. 
Since the class of regular languages is closed under union, any semi-linear set is the image of a regular language under $\pi$.

It is well-known \cite{Harrison} that a Boolean combination of semi-linear sets is again semi-linear.
Indeed, the semi-linear sets are exactly the sets definable in Presburger arithmetic \cite{GS}.

Let $L = \bigcap_{j=1}^q L_j$  be a poly-context-free language. 
Now \emph{suppose}  that the restriction to $\bigcup_{j=1}^q L_j$ of the Parikh map $\pi$ is injective.  
Then 
$$
\pi(L) =  \pi(\bigcap_{j=1}^q L_j) =  \bigcap_{j=1}^q \pi(L_j)
$$
and thus $\pi(L)$ is semi-linear.
This gives a general strategy for showing that some word problems cannot be poly-context-free.

Brough \cite{Brough} proved the general result that none of the Baumslag-Solitar groups 
$$ BS(1,n) = \langle b,t; t b^m t^{-1} = b^n \rangle $$
where $ 1 \le |m| < |n|$ are in $\PG$.  
We illustrate the  techinique outlined above by giving a simple proof of the following:

\begin{proposition}[Brough]  None of the  Baumslag-Solitar groups
$$\BS(1,n) = \langle b,t; t b t^{-1} = b^n \rangle , n  \ge 2 $$
have poly-context-free word problem.
\end{proposition}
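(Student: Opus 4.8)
The plan is to carry out the ``general strategy'' described just above: intersect the word problem with a cleverly chosen regular language on which the Parikh map is injective, and then show that the resulting Parikh image cannot be semi-linear. Write $\Sigma = \{b,b^{-1},t,t^{-1}\}$ for the group alphabet of $\BS(1,n)$, let $\pi \colon \Sigma^* \to \N^4$ be the Parikh map, with coordinates ordered as the numbers of occurrences of $b$, $b^{-1}$, $t$, $t^{-1}$, and let $W = \WP(\BS(1,n):b,t;tbt^{-1}=b^n)$. Assume for contradiction that $W$ is poly-context-free, say $W = \bigcap_{i=1}^{q} M_i$ with each $M_i$ context-free.

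First I would fix the regular language
$$
R := \{\, t^k b\, t^{-l} b^{-m} : k,l,m \ge 0 \,\} \subseteq \Sigma^* .
$$
Since every word of $R$ contains exactly one occurrence of $b$, the restriction $\pi|_R$ is injective: from $\pi(t^k b\, t^{-l} b^{-m})$ one reads off $k$, $l$, $m$ as the numbers of $t$'s, $t^{-1}$'s and $b^{-1}$'s, and hence recovers the whole word. Next I would determine $W\cap R$. Working either with Britton's Lemma in the HNN-extension $\BS(1,n)$, or in the isomorphic semidirect product $\Z[\tfrac{1}{n}]\rtimes\Z$ with $t$ acting as multiplication by $n$, a direct computation gives
$$
t^k b\, t^{-l} b^{-m} = 1 \text{ in } \BS(1,n) \iff k = l \text{ and } m = n^k ,
$$
so that $W\cap R = \{\, t^k b\, t^{-k} b^{-n^k} : k \ge 0 \,\}$ and
$$
\pi(W\cap R) = \{\, (1, n^k, k, k) : k \ge 0 \,\} \subseteq \N^4 .
$$

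Finally I would extract the contradiction. Since $R$ is regular, each $M_i\cap R$ is context-free, so $W\cap R = \bigcap_{i=1}^q (M_i\cap R)$ is an intersection of finitely many context-free languages, all contained in $R$. As $\pi$ is injective on $R$, we get $\pi(W\cap R) = \bigcap_{i=1}^q \pi(M_i\cap R)$; by Parikh's theorem each $\pi(M_i\cap R)$ is semi-linear, and a finite intersection of semi-linear sets is again semi-linear, so $\pi(W\cap R)$ would be semi-linear. But a coordinate projection of a semi-linear subset of $\N^4$ is semi-linear, being definable in Presburger arithmetic, and hence eventually periodic as a subset of $\N$; projecting $\pi(W\cap R)$ onto its second coordinate yields $\{\, n^k : k \ge 0 \,\} = \{1,n,n^2,\ldots\}$, which is not eventually periodic because $n \ge 2$. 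This contradiction proves the proposition.

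The only step carrying real content is the identification of $W\cap R$ in the second paragraph, and I expect it to be the main point: it is precisely the choice of $R$ that simultaneously keeps $\pi|_R$ injective and forces the Parikh image of $W\cap R$ onto the sparse, non-semi-linear set $\{n^k\}$. The computation itself is a routine application of Britton's Lemma (or an equally routine computation in $\Z[\tfrac{1}{n}]\rtimes\Z$), and everything else is bookkeeping with closure under intersection (Proposition~\ref{p:union-int}, Theorem~\ref{t:char-multip}) and with Parikh's theorem.
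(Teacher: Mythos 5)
Your proof is correct, and up to the last step it is the same argument as the paper's: both intersect $W$ with the regular language $R = t^+ b\, t^- b^-$, identify $W \cap R = \{t^k b\, t^{-k} b^{-n^k}\}$, and use the injectivity of the Parikh map on $R$ together with Parikh's theorem and closure of semi-linear sets under finite intersection to conclude that $\{(1,n^k,k,k)\}$ would have to be semi-linear. The two arguments diverge only in how this is refuted. The paper converts the putative semi-linear set into a regular language $U$ with the same Parikh image and applies the pumping lemma for regular languages; this forces a short but somewhat delicate case analysis showing that no deletion of between $1$ and $C$ letters from a suitable $u\in U$ can return a Parikh vector of the form $(1,n^{s'},s',s')$ (one must separately exclude deleting the unique $b$, deleting matched $t,t^{-1}$ pairs, and deleting only $b^{-1}$'s). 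You instead project onto the $b^{-1}$-coordinate, note that coordinate projections preserve semi-linearity, and observe that a semi-linear subset of $\N$ is eventually periodic whereas $\{n^k : k\ge 0\}$ with $n\ge 2$ is not. Your ending is more elementary and eliminates the case analysis entirely; the paper's ending stays strictly within language-theoretic tools (regular languages and pumping) at the cost of that bookkeeping. Both are complete, and the essential content --- the choice of $R$ making $\pi|_R$ injective while pinning $\pi(W\cap R)$ to an exponentially sparse set --- is identical.
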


\begin{proof}
Fix $n \ge 2$. Let $\Sigma = \{b,b^{-1},t, t^{-1}\} $ be the associated group alphabet and let $W \subseteq \Sigma^*$ be the word problem for the Baumslag-Solitar group $\BS(1,n)$.  
We write $t^+$ (respectively $t^-$, respectively $b^-$) to denote the set of all words of the form $t^s$ (respectively $t^{-s}$, respectively $b^{-s}$), where $s \in \N$. We then consider the regular language 
$$R:= t^+bt^-b^- \subseteq \Sigma^*.$$
Observe that $W':=W \cap R$ is exactly  $\{t^s b t^{-s} b^{-n^s} : s \ge 1 \}$.

Suppose by contradiction that the word problem $W$ for $\BS(1,n)$ is poly-context-free. 
Then  $W = \bigcap_{j=1}^q L_j$, where $L_j \subseteq \Sigma^*$ is 
context-free for $j=1,2,\ldots,q$.  Set $L_j' := L_j \cap R$
and observe that each $L_j'$ is context-free. We thus have
\[
W'=  \bigcap_{j=1}^q L_j' \subseteq R
\]
Now consider  the Parikh map $\pi \colon \Sigma^* \to \N^4$ and observe that $\pi\vert_R$ is injective. Since $W'$ is poly-context-free, 
it follows that $\pi(W') \subseteq \N^4$ is semi-linear and there is a regular language $U \subseteq \Sigma^*$ such that 
\begin{equation}
\label{e:U-parikh}
\pi(U) = \pi(W') = \{(1,n^s,s,s): s \geq 1\} \subseteq \N^4.
\end{equation}
One has to be careful since letters in words in $U$ may occur in a very different order than in words of $W'$. (For example, the words $t^s b t^{-s} b^{-n^s} \in W'$ and $b^{-n^s} (t t^{-1})^s b \in W \setminus W'$ have the same image under the Parikh map.)

Let $C$ be the pumping lemma constant for $U$.  Chose $s$ so that $n^s > n^{s-1} + C$.
Then we can find a word $u \in U$ such that $\pi(u) = \pi(t^s b t^{-s} b^{-n^s}) = (1,n^s,s,s) \in \N^4$.
Then, by the pumping lemma, $u$ has a decomposition $u = xyz$ where $|xy| \le C, |y| \ge 1$ and (by  pumping down) $u' := xz \in U$. 

We claim that no word obtained by deleting at least one and no more than $C$ letters from $u$ can have the same Parikh vector as any word in $W'$.
First, we cannot delete $b$ since all words in $W'$ contain exactly one occurrence of $b$.
Suppose we delete $0 < j \leq s$ occurrences of $t$ or $t^{-1}$. Then we must delete the same number of each to have a word with the same $\pi$-image as a word in $W'$. Let us denote by $0 \leq \ell \leq C$ the number of occurrences of $b^{-1}$ that have also been deleted, so that 
$\pi(u') = (1,n^s - \ell, s-j,s-j)$. Since $\pi(u') \in \pi(U)$, \eqref{e:U-parikh} gives us
$n^s - \ell = n^{s-j} \leq n^{s-1} < n^s - C$, so that $\ell > C$, a contradiction.
Finally, if we only cancel some occurrences of $b^{-1}$, then \eqref{e:U-parikh} again yields a
contradiction.
This shows that $W$ is not \pcf.
\end{proof}

There is a faithful representation of the Baumslag-Solitar group $\BS(1,n)$, $n \geq 2$, into $\GL(2,\Z[\frac{1}{n}])$ 
which has been studied in connection with diffeomorphisms of the circle \cite{BW,GL}.   For each $n \ge 2$ we construct 
a faithful representation of the group $\BS(1,n^{2})$ into $\SL(2,\Z[\frac{1}{n}])$  by setting 
\begin{equation}
\label{e:rep-s} 
b \mapsto 
\left( \begin{matrix}  1 & 1 \\
0 & 1 \end{matrix} \right)
\ \ \mbox{ and } \ \ 
t  \mapsto  
\left(\begin{matrix}  n & 0 \\
0 & \frac{1}{n} \end{matrix} \right).
\end{equation}

It is easy to verify that the proposed mapping into $\SL(2,\Z[\frac{1}{n}])$ preserves the 
defining relation by multiplying out the matrices for the product $tbt^{-1}$. 
So the representation \eqref{e:rep-s} is well-defined.   

\begin{theorem}  
The representation of $\BS(1,n^{2})$ into $\SL(2,\Z[\frac{1}{n}])$ defined in \eqref{e:rep-s} is faithful.
\end{theorem}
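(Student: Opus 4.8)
The plan is to show directly that the homomorphism $\rho\colon\BS(1,n^{2})\to\SL(2,\Z[\frac{1}{n}])$ determined by \eqref{e:rep-s} (which is well defined, since as already noted it respects the relation $tbt^{-1}=b^{n^{2}}$) has trivial kernel. The engine of the proof is the standard normal form for a Baumslag--Solitar group $\BS(1,m)=\langle b,t;\,tbt^{-1}=b^{m}\rangle$: \emph{every} element can be written as $t^{-p}b^{k}t^{q}$ with $p,q\ge 0$ and $k\in\Z$. This follows from Britton's Lemma applied to $\BS(1,m)$ viewed as the ascending HNN extension with base $\langle b\rangle\cong\Z$, stable letter $t$, and associated monomorphism $b\mapsto b^{m}$; alternatively it can be checked by hand, since the set of elements of that shape contains $1$ and is closed under left and right multiplication by $b^{\pm1}$ and by $t^{\pm1}$, using the identities $t^{q}b^{\ell}=b^{\ell m^{q}}t^{q}$ and $b^{\ell}t^{-p}=t^{-p}b^{\ell m^{p}}$.

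Next I would compute the image of such a normal-form word. Writing $B=\bigl(\begin{smallmatrix}1&1\\0&1\end{smallmatrix}\bigr)$ and $T=\bigl(\begin{smallmatrix}n&0\\0&1/n\end{smallmatrix}\bigr)$ for the images of $b$ and $t$, one has $B^{k}=\bigl(\begin{smallmatrix}1&k\\0&1\end{smallmatrix}\bigr)$ and $T^{j}=\bigl(\begin{smallmatrix}n^{j}&0\\0&n^{-j}\end{smallmatrix}\bigr)$ for all $k,j\in\Z$, so that
\[
\rho\bigl(t^{-p}b^{k}t^{q}\bigr)=T^{-p}B^{k}T^{q}=\begin{pmatrix} n^{q-p} & k\,n^{-p-q}\\ 0 & n^{p-q}\end{pmatrix}.
\]

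Finally, suppose $g=t^{-p}b^{k}t^{q}$ lies in $\ker\rho$, so the displayed matrix equals the identity. Comparing $(1,1)$ entries gives $n^{q-p}=1$, and since $n\ge 2$ this forces $p=q$; comparing $(1,2)$ entries then gives $k\,n^{-2p}=0$, hence $k=0$. But then $g=t^{-p}b^{0}t^{p}=t^{-p}t^{p}=1$, so $\ker\rho$ is trivial and $\rho$ is faithful. The only place I expect to need genuine care is the normal-form statement, i.e.\ a clean invocation of Britton's Lemma (or the closure argument above); once that is in hand the rest is a one-line matrix computation. As an alternative that avoids the normal form, one can observe that $\rho$ maps $\BS(1,n^{2})$ into the group $U$ of matrices $\bigl(\begin{smallmatrix}a&c\\0&a^{-1}\end{smallmatrix}\bigr)$ with $a\in n^{\Z}$ and $c\in\Z[\frac{1}{n}]$, that $\bigl(\begin{smallmatrix}a&c\\0&a^{-1}\end{smallmatrix}\bigr)\mapsto\log_{n}a$ is a homomorphism $U\to\Z$ whose composite with $\rho$ is the $t$-exponent-sum map $\tau\colon\BS(1,n^{2})\to\Z$ (so $\ker\rho\subseteq\ker\tau=\langle\langle b\rangle\rangle$), and that $\rho$ restricted to $\langle\langle b\rangle\rangle$ is injective because $\rho(t^{-p}b^{k}t^{p})=\bigl(\begin{smallmatrix}1&k/n^{2p}\\0&1\end{smallmatrix}\bigr)$ is the identity only when $k=0$.
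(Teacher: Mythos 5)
Your proof is correct, and it takes a genuinely different (and in some ways cleaner) route than the paper's. The paper argues up to conjugation and inversion: it first disposes of powers of $b$ and $t$, then uses $tb=b^{n^2}t$ to show that any remaining $w$ (or $w^{-1}$) is conjugate to a word $b^{m_1}t^{-\ell_1}\cdots b^{m_r}t^{-\ell_r}$ with all $\ell_i>0$, whose image has diagonal entries $n^{-\ell},n^{\ell}$ with $\ell=\sum\ell_i>0$ and so cannot be the identity; only the diagonal is ever inspected. You instead invoke the global normal form $t^{-p}b^{k}t^{q}$ for the ascending HNN extension $\BS(1,m)$ and kill the kernel with a single matrix computation, reading off $p=q$ from the diagonal and then $k=0$ from the $(1,2)$ entry. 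What your approach buys is uniformity: the case of elements with zero $t$-exponent sum (those conjugate to ``fractional'' powers of $b$ such as $t^{-1}bt$, which are not literal powers of $b$ and are not covered by the paper's target word shape with $\ell>0$) is handled by the same computation rather than needing separate treatment, and you get triviality of the kernel directly rather than via conjugacy. The price is that you must justify the normal form, but your closure argument (the set $\{t^{-p}b^{k}t^{q}\}$ contains $1$ and is stable under right multiplication by the generators, using $t^{q}b^{\ell}=b^{\ell m^{q}}t^{q}$ and $b^{\ell}t^{-1}=t^{-1}b^{\ell m}$) does this cleanly without even needing Britton's Lemma. Your matrix identity $T^{-p}B^{k}T^{q}=\left(\begin{smallmatrix} n^{q-p} & k n^{-p-q}\\ 0 & n^{p-q}\end{smallmatrix}\right)$ checks out, as does the second, exponent-sum-homomorphism variant at the end.
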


\begin{proof}
Let $w \in \BS(1,n^2)$ be an arbitrary reduced word.
First note that from \eqref{e:rep-s} one immediately deduces that
$$ b^m \mapsto  
\left( \begin{matrix}  1 & m \\
0 & 1 \end{matrix} \right)
\ \ \mbox{ and } \ \ 
t^{-\ell}  \mapsto  
\left(\begin{matrix} \frac{1}{n^{\ell}} & 0 \\
0 & n^{\ell} \end{matrix} \right),
$$
for all $m, \ell \in \Z$, so no nonzero power of $b$ or of $t$ goes to the identity.
Thus, we can suppose that $w$ is not equal to a power of $b$ or $t$.
Up to replacing $w$ with $w^{-1}$ if necessary, we may suppose that $w$ has 
nonpositive exponent sum on $t$. Since the defining relation for $\BS(1,n^2)$ 
can be written $tb = b^{n^2} t$, we can move all occurrences of $t^{+1}$ 
in $w$ to the right, yielding a word of the form $w_1t^\alpha$, where
$w_1$ does not contain any positive powers of $t$ and $\alpha \geq 0$. 
Thus $w$ is conjugate to $w':=t^\alpha w_1$.
Moving $t^\alpha$ to the right, all positive occurrences of $t$ will cancel in $w'$, since $w$ has nonpositive exponent sum on $t$. 
Since $w$ is not a power of $b$, we have that $w$ or $w^{-1}$ is conjugate to a word of the form $b^{m_1}t^{-\ell_1}b^{m_2}t^{-\ell_2}\cdots b^{m_r}t^{-\ell_r}$ where $m_i \in \Z$ and $\ell_i > 0$ for all $i=1,2,\ldots, r$. 
The image of such an element is a matrix of the form
$$ 
\left(\begin{matrix} \frac{1}{n^{\ell}} & * \\
0 & n^{\ell} \end{matrix} \right)
$$
where $\ell = \ell_1 + \ell_2 + \cdots + \ell_r > 0$, and therefore is not the identity matrix.
This shows that \eqref{e:rep-s} is faithful.
\end{proof}

The groups $SL(2,\mathbb{Z}[\frac{1}{n}])$ are interesting here because Serre \cite{Serre} showed that the groups 
$\SL(2,\Z[\frac{1}{p}])$, $p$ a prime, are free products of two copies 
of $\SL(2,\mathbb{Z})$ amalgamating subgroups of finite index.  \emph{However}, the amalgamation is \emph{not} induced by an 
isomorphism between the two copies and so these groups are \emph{not} doubles. Since a double of $\SL(2,\mathbb{Z})$ over a 
subgroup of finite index is in $\PG$ (cf. Theorem \ref{t:double}), the groups $\SL(2,\Z[\frac{1}{p}])$ are on the borderline 
of not being in $\PG$.

\section{Some decision problems}
Many classical decision problems are already undecidable for direct products of free groups.
Mikhailova's theorem \cite{LS} shows that $F_2 \times F_2$ has specific finitely generated 
subgroups with unsolvable \emph{membership problem}. C.F. Miller III \cite{Miller} showed that  
$F_3 \times F_3$ contains subgroups with unsolvable \emph{conjugacy problem}. Miller \cite{Miller} also showed that if $n \ge 5$ then the \emph{generating problem} for $F_n \times F_n$ is undecidable. 
Thus the conjugacy, membership and generating problems are generally unsolvable for groups in
$\mathcal{D}$.

The \emph{order problem} for a finitely generated group is the problem of deciding whether or not 
arbitrary elements have infinite order.  Holt, Rees, R\"over and Thomas \cite{HRRT} prove the
interesting result that groups with \cf co-word problem have solvable order problem. We use the main
ideas of their proof to establish the same result for groups in $\BDG$.

\begin{theorem} 
\label{t:order-p}
A group in $\BDG$ has solvable order problem.
\end{theorem}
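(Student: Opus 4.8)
The plan is to mimic the argument of Holt, Rees, R\"over and Thomas for groups with context-free co-word problem, adapting it to the closure properties of $\BDC$ that we have already established. Let $G = \langle X; R\rangle$ be a group in $\BDG$, let $\Sigma = X \cup X^{-1}$ be the group alphabet, and let $W = \WP(G : X; R) \subseteq \Sigma^*$ be its word problem, which by assumption is a deterministic multipass language. Given a word $w \in \Sigma^*$ representing an element $g \in G$, we want to decide whether $g$ has infinite order. The key observation is that $g$ has \emph{finite} order if and only if $w^n = 1$ in $G$ for some $n \geq 1$, that is, if and only if the regular language $w^* = \{w^n : n \geq 0\}$ meets $W$ in a word other than $\varepsilon$; equivalently, $g$ has infinite order if and only if $w^* \cap W = \{\varepsilon\}$ (or $= \emptyset$, depending on whether $w$ itself represents $1$).

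The first step is to note that $W$, being in $\BDC$, is in particular in $\PCF$, so $W = \bigcap_{j=1}^q L_j$ with each $L_j$ context-free; hence $w^* \cap W = \bigcap_{j=1}^q (w^* \cap L_j)$ is poly-context-free. But a language contained in $w^*$ for a single fixed word $w$ is naturally identified, via the length-reducing correspondence $w^n \leftrightarrow n$, with a subset of $\N$; one checks that this correspondence takes context-free (indeed poly-context-free) subsets of $w^*$ to semi-linear, hence \emph{ultimately periodic}, subsets of $\N$ — this is where Parikh's theorem enters, exactly as in Section 5: the Parikh map restricted to $w^*$ (more precisely to the regular language $w^*$, whose image already determines everything) is injective on lengths, so $\pi(w^* \cap W)$ is semi-linear, and a semi-linear subset of a ray $\{n \boldsymbol{v} : n \in \N\}$ is ultimately periodic. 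Thus $\{ n \geq 0 : w^n = 1 \text{ in } G\}$ is an ultimately periodic subset of $\N$.

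The second step is to make this effective. An ultimately periodic subset of $\N$ is either finite or contains an infinite arithmetic progression; and $\{n : w^n = 1\}$ is a subgroup of $\Z$ intersected with $\N$, so it is either $\{0\}$ or all of $\N$ in the "period $1$" sense — more precisely it is $d\N$ for some $d \geq 0$, where $d$ is the order of $g$ (with $d = 0$ meaning infinite order). So $g$ has infinite order if and only if $w^n \neq 1$ in $G$ for all $n \geq 1$, and $g$ has finite order if and only if $w^d = 1$ for some $d \geq 1$. To decide this we exploit decidability of the word problem for $G$, which holds because $W \in \BDC$ and a deterministic multipass automaton (made complete via Lemma \ref{l:complete}) can be run to a halt on any input, so membership in $W$ is decidable. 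We now run two semi-decision procedures in parallel: one enumerates $n = 1, 2, 3, \ldots$ and tests whether $w^n = 1$ in $G$, halting with answer "finite order" if it ever succeeds; the other searches for a certificate of infinite order. For the latter, the point of the ultimate periodicity is that the "eventual period" is bounded: if $w^n \neq 1$ in $G$ for all $n$ in some sufficiently long initial segment — a length computable from the (poly-)context-free description obtained from the multipass automaton for $W$, via the pumping-lemma constants of the $L_j$ as in Section 5 — then $w^n \neq 1$ for all $n \geq 1$. Thus one of the two procedures always halts, and the order problem is solvable.

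The main obstacle I expect is the effectivity bookkeeping in the last step: extracting from a deterministic multipass automaton for $W$ an explicit context-free (or poly-context-free) description of $w^* \cap W$ together with a computable bound beyond which the periodicity has certainly set in. This requires threading the construction in the proof of Theorem \ref{t:char-multip} — which writes a multipass language as $\bigcup_i \bigcap_j L_{i,j}$ — together with the effective closure of context-free languages under intersection with a regular set, and then invoking an effective form of Parikh's theorem to bound the period and threshold of the resulting semi-linear set. All the individual ingredients are standard and effective, but assembling the uniform bound is the delicate part; everything else is a routine combination of decidability of the word problem with the finiteness of the period.
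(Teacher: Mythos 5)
Your overall strategy (reduce to deciding whether $\{n\ge 1: w^n=1\}$ is empty, using the fact that this set is forced to be of the form $d\N$ and that Parikh images of the relevant languages are semi-linear) is reasonable, but it diverges from the paper's proof at exactly the point where you leave a gap, and that gap is genuine. The paper first \emph{complements} the word problem: since $\BDC$ is closed under complementation, $C=\Sigma^*\setminus W$ is again in $\BDC\subseteq\PCF$, say $C=\bigcap_i L_i$, and then ``$w$ has infinite order'' becomes the universally quantified condition $w^+\subseteq C$, i.e.\ $w^+\cap C=w^+$. The point of complementing is that a universal condition distributes over the intersection: $w^+\cap C=w^+$ iff $w^+\cap L_i=w^+$ for \emph{each} $i$ separately, and each of these is an equality between a bounded context-free language and $w^+$, which is decidable by Ginsburg's results on bounded languages. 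You instead work with $W=\bigcap_j L_j$ itself, where the relevant condition ``$\exists n\ge 1:\ w^n\in\bigcap_j L_j$'' is existential and does \emph{not} distribute over the intersection, so you cannot test the $L_j$ one at a time; the paper explicitly remarks that taking the complement is essential to its argument.

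The concrete gap is your claim that a threshold $N$ (such that $w^n\neq 1$ for all $1\le n\le N$ implies infinite order) is ``computable \dots via the pumping-lemma constants of the $L_j$.'' Pumping does not deliver this: pumping a word $w^n\in L_j$ down removes an arbitrary substring, so the resulting word need not be a power of $w$ at all, and even if it were, membership in the other languages $L_{j'}$ is not preserved. So no bound on the least positive element of $\bigcap_j\{n:w^n\in L_j\}$ follows from the pumping constants, and your second semi-decision procedure has nothing to certify infinite order with. (Mere ultimate periodicity of $\{n:w^n=1\}$ is trivial --- it equals $d\N$ or $\{0\}$ --- so the real content is entirely in the effectivity you have postponed.) Your route can be repaired, but it needs the \emph{effective} form of Parikh's theorem (computing a semi-linear representation of $\pi(w^*\cap L_j)$ from a grammar), the effective closure of semi-linear sets under intersection (Ginsburg--Spanier), and decidability of emptiness of the resulting semi-linear set, using injectivity of $\pi$ on $w^*$ to pass from $\bigcap_j\pi(w^*\cap L_j)$ back to $w^*\cap W$; note that emptiness of intersections of context-free languages is undecidable in general (compare Proposition \ref{p:emptyness}), so the restriction to the bounded language $w^*$ must be used essentially. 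Alternatively, follow the paper: complement first, then apply Ginsburg's equality test for bounded context-free languages to each $L_i$ individually.
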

\begin{proof} 
Let $G =\langle X;R\rangle \in \BDG$ and denote by $\Sigma = X \bigcup X^{-1}$ the associated group alphabet. Since the class of 
deterministic multipass languages is closed under complementation (cf. Proposition \ref{p:dmp-complement}), the complement 
$C:= \Sigma^* \setminus \WP(G:X;R)$ of the word problem of $G$ is also deterministic multipass. By virtue of 
Theorem \ref{t:char-multip} we can find context-free languages $L_1, L_2, \ldots, L_n \subseteq \Sigma^*$ such that 
$C = \bigcap_{i=1}^n L_i$.
For any $w \in \Sigma^*$ let $w^+:= \{w^n: n \geq 1\} \subseteq \Sigma^*$ denote the regular language consisting of all 
positive powers of $w$. Then $w$ represents a group element of infinite order if and only if the multipass language 
$L_{w}:= (w^+ \cap C)$ equals $w^+$.  
But $L_w = w^+$ if and only if $(w^+ \cap L_i) = w^+$ for all $i=1,2,\ldots,n$. 
Since the class of context-free languages is closed under intersection with regular languages, each language $L_i':= w^+ \cap L_i$ is context-free (in fact, a grammar $\GG_i'$ for
$L_i'$ can be effectively constructed from a grammar $\GG_i$ for $L_i$).  

Recall that a language $L \subseteq \Sigma^*$ is said to be \emph{bounded} provided there are words
$z_1,z_2,\ldots,z_m \in \Sigma^*$ such that $L \subseteq z_1^* z_2^* \cdots z_m^*$. 
Thus $w^+$ and therefore $L_1', L_2',\ldots,L_n'$ are bounded context-free     languages.
Although equality is generally undecidable for \cf languages, results of Ginsburg \cite{Ginsburg}
show that there is an algorithm which, given a bounded \cf language and another \cf language, 
decides if they are equal. So we can decide if $w^+ = L_i'$ for each $i=1,2,\ldots,n$ and thus whether or not $w$ represents a group element of infinite order. 
\end{proof}

Note that taking the complement of the word problem is essential for the above proof.

It is well known that the membership problem for deterministic context-free languages is
decidable in linear time \cite[Section 5.6]{Harrison}.
Essentially the same argument gives  the following:

\begin{theorem} 
\label{t:membership}
 If $M$ is a complete, deterministic multipass automaton,  the membership problem for $L(M)$ is  
solvable in linear time.
\end{theorem}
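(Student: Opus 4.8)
The plan is to reduce to the classical fact (see \cite[Section 5.6]{Harrison}) that a deterministic pushdown automaton which always scans its entire input can be simulated in time linear in the length of the input. The point is that a single pass of $M$ is, up to the bookkeeping of the pass counter, exactly a deterministic pushdown computation on $w\sharp$: on each pass $M$ starts with an empty stack in a fixed state, reads $w\sharp$ from left to right, never gets stuck in the middle of a pass (by definition of the transition function it always has either a letter-reading transition or an $\varepsilon$-transition available), and --- since $M$ is complete --- is guaranteed to reach the end-marker $\sharp$.

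First I would isolate the only genuinely delicate point, the treatment of $\varepsilon$-transitions. Within a pass, a naive step-by-step simulation need not run in linear time, because $M$ may push a word of length linear in $|w|$ onto the stack and then pop it again via a long block of $\varepsilon$-transitions. The classical remedy applies verbatim: because $M$ is deterministic, the behaviour of $M$ during a maximal block of $\varepsilon$-transitions depends only on the current state together with the symbol on top of the stack, and because $M$ is complete every such block is finite. Hence one may precompute, for each pass $j$ and each pair $(q,\gamma) \in Q \times \Gamma_{\varepsilon}$, the outcome of running the $\varepsilon$-transitions of $M$ from a surface configuration with state $q$ and top symbol $\gamma$: either $M$ becomes ready to read an input letter, having replaced $\gamma$ by a fixed word of bounded length, or $M$ erases $\gamma$ (pops it together with everything pushed above it) and continues in a fixed state with the symbol beneath $\gamma$ exposed. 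Memoising these finitely many outcomes, each ``ready-to-read'' event and each letter-reading transition adds only a bounded number of symbols to the stack, so over an entire pass the total number of stack pushes --- and hence of pops, since pops are bounded by pushes --- is $O(|w|)$, and the pass is simulated in $O(|w|)$ steps.

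Next I would assemble the passes. Between pass $j$ and pass $j+1$ the machine merely carries a single control state over via $\delta(j,q,\sharp,\gamma)=q'$, resets the head, and empties the stack, all of which is $O(1)$ work; and acceptance on the last pass is determined by a single transition. Since $M$ has a fixed number $k$ of passes, the total running time is $k\cdot O(|w|)=O(|w|)$, so membership in $L(M)$ is decided in linear time.

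The main obstacle is the $\varepsilon$-transition analysis of the second paragraph; everything else is routine bookkeeping that the multipass setting adds on top of the pushdown case (the pass counter, the inter-pass state transfer, and the resetting of the stack). It is worth noting explicitly that completeness of $M$ is used precisely to guarantee that these $\varepsilon$-blocks terminate, so that the memoised ``$\varepsilon$-closure'' function is everywhere defined; by Lemma \ref{l:complete} restricting to complete automata is no loss of generality at the level of languages, although the statement as given is about a fixed complete automaton.
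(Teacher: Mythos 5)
Your proposal is correct and follows essentially the same route as the paper: both arguments adapt the classical linear-time simulation of a complete deterministic pushdown automaton, using completeness to bound the $\varepsilon$-transition blocks, amortizing pops against pushes to get $O(|w|)$ work per pass, and multiplying by the fixed number $k$ of passes. The only cosmetic difference is that you phrase the $\varepsilon$-block analysis via a memoised closure function while the paper simply counts transitions with explicit constants $B$ and $C$, yielding the bound $kCB^2n$.
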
  

\begin{proof}

Since $M$ is complete there is a bound $B$ on the number of $\epsilon$-transitions which $M$ can make before either advancing the tape
or erasing the symbol which was on the top of the stack when $M$ began the sequence of $\epsilon$-transitions.
Let $C$ be the maximum number of symbols which $M$ can add to the stack on a single transition. Then on a single pass
on an input of length $n$, at most $CBn$ symbols can be added to the stack.  So in the complete run of $M$ on the input
at most $kCBn$ symbols can be added to the stack and if it takes $B$ moves to erase each symbol then $M$ makes at most
$kCB^2n$ transitions.
\end{proof}

The Cocke-Kasami-Younger algorithm \cite{Harrison, HU} shows that the membership problem for an arbitrary context-free language is decidable in time $\mathcal{O}(n^3)$.  
Since we need only check subsequent terms in an intersection of context-free languages,  
it follows that the membership problem for any poly-context-free language is decidable in  
time $\mathcal{O}(n^3)$.  
  
In formal language theory it is well-known that deciding whether or not the intersection
of two deterministic context-free languages is empty is undecidable. 
That is, there does not exist an algorithm which, when given two deterministic push-down automata $M_1$ and $M_2$ over the same input alphabet, decides whether or not 
$L(M_1) \cap L(M_2) =\emptyset$. The standard construction shows that one can represent valid computations of Turing machines as the intersection of two deterministic context-free languages \cite{HU}.
Since the intersection of two deterministic context-free languages is a deterministic multipass language, we have:
 
\begin{proposition} 
\label{p:emptyness}
The emptiness problem for deterministic multipass languages is undecidable. That is, there does not exist an algorithm which, when given a deterministic multipass automaton $M$ decides whether or not $L(M) =\emptyset$.
\end{proposition}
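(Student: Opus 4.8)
The plan is to reduce the known undecidability of the emptiness of the intersection of two \dcflangs to the emptiness problem for deterministic multipass languages, exploiting the fact (established in Proposition \ref{p:union-int}) that $\DM$ is effectively closed under intersection.

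First I would recall the classical fact (cf. \cite{HU}) that there is no algorithm which, given two deterministic pushdown automata $M_1$ and $M_2$ over a common input alphabet, decides whether $L(M_1) \cap L(M_2) = \emptyset$. This is proved by the standard device of encoding the halting computations of a fixed universal (or suitably chosen) Turing machine $T$ as words over an appropriate alphabet: one \dcflang checks the consistency of consecutive configurations at the ``odd'' matching positions, the other checks consistency at the ``even'' positions, and their intersection is nonempty precisely when $T$ has a halting computation on the given input. Since the halting problem is undecidable, so is the emptiness of this intersection.

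Next I would observe that this reduction is effective at the level of multipass automata. By Proposition \ref{p:det-multi}, from a deterministic pushdown automaton $M_i$ accepting $L_i$ by final state one effectively constructs a deterministic $1$-pass automaton $M_i'$ with $L(M_i') = L_i$; and the construction in the proof of Proposition \ref{p:union-int} (closure of $\DM$ under intersection), applied to $M_1'$ and $M_2'$, effectively produces a deterministic $2$-pass automaton $M$ with $L(M) = L_1 \cap L_2$: on the first pass $M$ simulates $M_1'$, and on reading the end-marker it passes to a rejecting sink state if $M_1'$ rejects, or else simulates $M_2'$ on the second pass and copies its final decision. Now suppose for contradiction that the emptiness problem for deterministic multipass automata were decidable. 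Then, given $M_1$ and $M_2$, we could build $M$ as above and decide whether $L(M) = L(M_1) \cap L(M_2) = \emptyset$, contradicting the classical result. Hence the emptiness problem for deterministic multipass languages is undecidable.

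I expect the only delicate point to be bookkeeping: one must check that the passage from a deterministic pushdown automaton accepting by final state to a deterministic $1$-pass automaton, and then the product construction, are genuinely algorithmic and preserve the accepted languages exactly (handling the end-marker $\sharp$ and the completeness convention of Lemma \ref{l:complete} correctly). All of this is routine and is already contained in the proofs of Propositions \ref{p:det-multi} and \ref{p:union-int}, so the argument amounts to assembling these effective constructions into a single reduction.
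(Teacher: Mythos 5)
Your argument is correct and follows essentially the same route as the paper: reduce from the classical undecidability of emptiness of the intersection of two deterministic context-free languages (valid Turing-machine computations encoded as such an intersection, cf.\ \cite{HU}), and observe that this intersection is a deterministic multipass language. The only difference is that you spell out the effectiveness of the passage from the two \dpdas to a single deterministic $2$-pass automaton via Propositions \ref{p:det-multi} and \ref{p:union-int}, a detail the paper leaves implicit; this is a welcome precision but not a different proof.
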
 

\section{The class $\mathcal{D}$ and HNN extensions}


We now show that the class $\mathcal{D}$ of groups  that are virtually finitely generated subgroups of direct product of free groups is closed under the class of HNN extensions given in
Theorem \ref{t:HNN}. In fact we will prove a slightly more general
statement: taking such HNN extensions virtually amounts to taking a
direct product with a free group of finite rank. 

\begin{theorem}\label{t:HNN-closed}
Let $G$ be finitely generated group in $\mathcal{D}$ with  $S$  a subgroup $G$ of
finite index. Let $\varphi \colon G \to G$ be an automorphism of
finite order of $G$ such that $\varphi(S) = S$. Let $H$ be the HNN-extension
$$
H = \langle  G,t; tst^{-1} = \varphi(s), s \in S\rangle.
$$
There is a subgroup $E  \leq S \leq G$ that is a finite index
characteristic subgroup of $G$. Furthermore for any such subgroup $E$,
$H$ has a finite index subgroup $H'$ isomorphic to $E \times F$,
where $F$ is a free group of finite rank.
\end{theorem}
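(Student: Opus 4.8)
Here is the approach I would take. For the first assertion: since $S$ has finite index in $G$, its normal core $\bigcap_{g\in G}gSg^{-1}$ has finite index, say $m$, and, $G$ being finitely generated, $G$ has only finitely many subgroups of index $m$; let $E$ be their intersection. Then $E$ has finite index, $E\subseteq S$, and $E$ is characteristic because $\operatorname{Aut}(G)$ permutes the finite set of index-$m$ subgroups. From now on fix an arbitrary characteristic subgroup $E\le S$ of finite index; then $\varphi(E)=E$, $E\trianglelefteq G$, and in fact $E\trianglelefteq H$ (it is normalized by $G$ and by $t$, since $tEt^{-1}=\varphi(E)=E$, the relations of $H$ applying because $E\subseteq S$). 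Conjugation then gives a homomorphism $\beta\colon H\to\operatorname{Out}(E)$. The plan is to produce a normal subgroup $H_0\trianglelefteq H$ of finite index with $H_0\cap G=E$ and $H_0\subseteq\ker\beta$, and then to show $H_0\cong E\times F$ with $F$ free of finite rank; then $H':=H_0$ works.

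Granting such an $H_0$: let $T$ be the Bass--Serre tree of the HNN splitting of $H$ (vertices $H/G$, edges $H/S$, $H$ acting without inversions). For $h\in H$ the $H_0$-stabilizers of the vertex $hG$ and of the edge $hS$ are both $h(H_0\cap G)h^{-1}=hEh^{-1}$ (using $H_0\trianglelefteq H$ and $H_0\cap S=H_0\cap G\cap S=E$), while the other endpoint $htG$ of $hS$ has $H_0$-stabilizer $htEt^{-1}h^{-1}=h\varphi(E)h^{-1}=hEh^{-1}$. So along each edge of $T$ these three subgroups coincide, whence by connectedness every $H_0$-stabilizer equals that of the base vertex, namely $E$. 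Thus $E=\ker(H_0\curvearrowright T)$, so $E\trianglelefteq H_0$ and $H_0/E$ acts freely on $T$; since $[H:H_0]<\infty$ the graph $T/H_0$ is finite, so $H_0/E\cong\pi_1(T/H_0)$ is free of finite rank, say $F$. The extension $1\to E\to H_0\to F\to1$ splits as $F$ is free; writing it $H_0=E\rtimes_\theta F$ for a homomorphic section $s$, with $\theta(f)=$ conjugation of $E$ by $s(f)$, the hypothesis $H_0\subseteq\ker\beta$ makes the composite $F\to\operatorname{Out}(E)$ trivial, so $\theta$ takes values in $\operatorname{Inn}(E)\cong E/Z(E)$. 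Lifting that homomorphism through $E\twoheadrightarrow E/Z(E)$ (possible since $F$ is free) gives $\iota\colon F\to E$ with $\theta(f)$ equal to conjugation by $\iota(f)$; then $s'(f):=s(f)\iota(f)^{-1}$ is again a homomorphic section, a direct check shows $s'(F)$ centralizes $E$, and hence $H_0=E\times s'(F)\cong E\times F$.

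It remains to construct $H_0$, and the key point is that $\operatorname{image}(\beta)$ is \emph{finite}. Indeed $\beta$ factors through $H/E$, which is generated by the images of $G$ and of $t$: the image of $G$ in $\operatorname{Out}(E)$ equals $G/(E\,C_G(E))$, a quotient of the finite group $G/E$; the image of $t$ is generated by the class $\tau_0$ of $\varphi|_E$, whose order divides that of $\varphi$; and, writing $c_x$ for conjugation by $x$, the identity $\varphi|_E\circ c_g|_E\circ(\varphi|_E)^{-1}=c_{\varphi(g)}|_E$ ($g\in G$) shows $\tau_0$ normalizes $\operatorname{image}(G\to\operatorname{Out}(E))$. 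Hence $\operatorname{image}(\beta)$ is the product of a finite normal subgroup and a finite cyclic subgroup, so it is finite; call it $P$. Now let $p$ be the order of $\varphi$, let $\Lambda:=(G/E)\rtimes\Z/p\Z$ with $\Z/p\Z$ acting through $\langle\overline\varphi\rangle$, and let $\Psi\colon H\to\Lambda$ restrict to $G\twoheadrightarrow G/E$ on $G$ and send $t$ to a generator of $\Z/p\Z$; this is well defined and $\ker\Psi\cap G=E$. Take $H_0:=\ker\bigl(H\xrightarrow{(\beta,\Psi)}P\times\Lambda\bigr)$: it is normal of finite index, it lies in $\ker\beta$, and $H_0\cap G=(\ker\beta\cap G)\cap(\ker\Psi\cap G)=E\,C_G(E)\cap E=E$, as wanted.

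The step I expect to be the real obstacle is obtaining a genuine direct product rather than merely a semidirect one. The naive guess $\langle E,t^{p}\rangle=E\times\langle t^{p}\rangle$ does centralize $E$ but has infinite index in $H$ whenever $S\subsetneq G$, so one must pass to the larger normal subgroup $H_0$ and then kill the outer action of $H_0/E$ on $E$; it is precisely the finiteness of $\operatorname{image}(\beta)$ --- resting on $[G:E]<\infty$ and on $\varphi$ having finite order --- that allows $H_0$ to be chosen both of finite index and inside $\ker\beta$.
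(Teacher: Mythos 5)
Your proof is correct, and its skeleton matches the paper's: both arguments rest on the same two pillars, namely that a characteristic finite-index subgroup $E\le S$ is normal in all of $H$, and that the conjugation image of $H$ in $\operatorname{Out}(E)$ is finite --- your identity $\varphi|_E\circ c_g|_E\circ(\varphi|_E)^{-1}=c_{\varphi(g)}|_E$ is precisely the relation the paper uses to put every such automorphism into the normal form $\varphi^{n}\circ c_g$ --- and both conclude with the same untwisting device, replacing a section $s(f)$ of the free quotient by $s(f)\iota(f)^{-1}$ where $\iota(f)\in E$ induces the same inner automorphism. Where you genuinely diverge is in how the finite-index subgroup with free quotient is produced. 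The paper first passes to $H/E$, observes that it is an HNN extension of the finite group $G/E$ and hence virtually free by the structure theory of graphs of finite groups (citing Serre), pulls back a finite-index free subgroup $F_k$ to obtain $\hat F_k\cong E\rtimes F_k$, and only afterwards restricts to $K=\ker\bigl(F_k\to\operatorname{Out}(E)\bigr)$. You instead manufacture a single normal subgroup $H_0=\ker\bigl(H\to P\times\Lambda\bigr)$ in one shot, arranging $H_0\cap G=E$ and $H_0\subseteq\ker\beta$ simultaneously, and then deduce freeness of $H_0/E$ from its free action on the Bass--Serre tree, using that all $H_0$-stabilizers coincide with $E$. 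The two routes are of comparable length; yours buys the small extra fact that $H'$ can be taken normal in $H$ (the paper's $H'$ need not be) and replaces the appeal to the graph-of-groups structure theorem by a direct stabilizer computation, at the cost of having to verify that $\Psi\colon H\to\Lambda$ is well defined. Your closing remark also correctly isolates the crux --- the finiteness of the image in $\operatorname{Out}(E)$, which depends on $\varphi$ being defined on all of $G$ --- and the paper makes the same point in the discussion following its proof.
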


\begin{proof}
Let $E \leq S \leq G \leq H$ be as in the statement of the
theorem. The proof is divided into five steps.

  ~\\ \noindent\textit{Step 1. The existence of $E$ and its normality
    in $H$.} Suppose that $S \leq G$ has index $f$. Then taking $E$ to
  be the intersection of all index $f$ subgroups of $G$ gives the
  desired property. Since $E$ is characteristic and thus 
  normal in $G$ and $\varphi$ is an automorphism of $G$, $t$
  normalizes $E$. $E$ is therefore normalized by a generating set of
  $H$ and is thus  normal in $H$.

  ~\\ \noindent\textit{Step 2. The quotient $H/E$ is virtually free.} This quotient splits
  as the HNN extension
  $$
  H/E = \langle \bar{G}, t ; t\bar{s}t^{-1} = \bar{\varphi}(\bar{s}),
  \bar{s} \in \bar{S}\rangle,
  $$
  where $\bar{G}$ denotes the quotient $G/E$ and $\bar{s},\bar{S}$
  denote the image of $s\in S$, the image of the subgroup $S$ in
  $\bar{G}$ respectively. $H/E$ is therefore the fundamental group of
  a finite graph of groups with finite vertex groups and therefore
  (c.f. \cite[II.2.6, Proposition 11 and Corollary]{Serre}) contains a
  finite index free subgroup $F_k$ of rank $k$.

  ~\\ \noindent\textit{Step 3. The structure of the preimage $\hat{F_k}$ as a
    semidirect product.} Denote by $\hat{F_k}$
  the preimage of $F_k$ in $H$ via the canonical epimorphism
  $H \to H/E$. Then $\hat{F_k}$ has finite index in $H$ and furthermore, since
  $F_k$ is free, the short exact sequence
  $$
  1 \to E \to \hat{F_k} \to F_k \to 1
  $$
  splits. Specifically there is a set of elements
  $\{f_1,\ldots,f_k\} \subseteq \hat F$ such that

  \begin{enumerate}
  \item $\langle f_1,\ldots,f_k \rangle \cong F_k$,
  \item $\hat{F_k} = \langle E, f_1,\ldots,f_k \rangle$, and
  \item
    $\hat{F_k} \cong E \rtimes_\rho \langle f_1,\ldots,f_k \rangle$,
    where $\rho: \langle f_1,\ldots,f_k \rangle \to \mathrm{Aut}(E)$
    is the representation induced by conjugation in $H$. Note that $\rho$ is
    well-defined since $E$ is normal in $\hat F$.
  \end{enumerate}
  We will now study the representation $\rho$ more closely.

  ~\\ \noindent\textit{Step 4. The action of $H$ on $E$ and outer automorphisms.} Since $E$ is normal in $H$ there
  is a natural conjugation representation
  $\rho_H: H \to \mathrm{Aut}(E)$. We now study the map
  $\bar\rho_H: H \to \mathrm{Out}(E)$, where
  $\mathrm{Out}(E) = \mathrm{Aut}(E)/\mathrm{Inn}(E)$. Our goal is to
  show that its image is finite.

  Denote by $\rho_G$ the restriction of $\rho_H$ to
  $\rho_G$. 
  We first note that, since $\varphi$ is an automorphism of the entire base group
  $G$, the following calculation holds for all  $k\in \Z,
  g \in G$ and $e \in E$:
  \begin{equation}
    \label{e:count-outer}
    \left(\varphi^k\circ\rho_G(g)\right)(e) = \varphi^k\left(g e g^{-1} \right) =
    \varphi^k(g) \varphi^k(e) \varphi^k(g^{-1}) = \left(\rho_G(\varphi(g))\circ\varphi^k\right)(e).
  \end{equation}
  Since every element of $H$ can be written as a product of elements of the form
  $t^kg$, with $k \in \Z$ and $g\in G$, and since
  $$
  \left(t^k g\right)e\left(g^{-1} t^{-k}\right) =
  \left(\varphi^k\circ\rho_G(g)\right)(e),
  $$
  the image $\rho_H(H)$ of $H$ in
  $\mathrm{Aut}(E)$ is generated by elements of the form
  $\varphi^k\circ\rho_G(g)$. The identity (\ref{e:count-outer}) enables
  us to collect $\varphi^k$s to the left, so we can express every
  element of $\rho_H(H)$ in a normal form:
  $$
  \varphi^n \circ \rho_G(g),
  $$
  with $1\leq n \leq \mathrm{order}(\varphi)$ and $g \in G$. If
  $g' = ge$, for some $g\in G, e \in E$ then
  $\rho_G(g')\mathrm{Inn}(E) = \rho_G(g)\mathrm{Inn}(E)$. It follows
  that if $g_1,\ldots,g_{[G:E]}$ is a finite set of left coset representatives
  of $E$ in $G$, then the finite set of automorphisms
  $$
  \left\{
    \varphi^i \circ \rho_G(g_j) \mid 1\leq i \leq
    \mathrm{order}(\varphi), 1\leq j \leq [G:E]
  \right\}
  $$
  gives a complete set of representatives of the image of $H$ in
  $\mathrm{Out}(E)$. The image of $\bar{\rho_H}:H \to \mathrm{Out}(E)$
  is therefore finite.

  ~\\ \noindent\textit{Step 5. Untwisting the semidirect product.} The
  twisting representation $\rho$ in
  $\hat{F_k} \cong E \rtimes_\rho \langle f_1,\ldots,f_k \rangle$ is
  the restriction of $\rho_H$ to the subgroup
  $\langle f_1,\ldots,f_k \rangle$. It follows from the previous part
  that the image of
  $\bar\rho: \langle f_1,\ldots,f_k \rangle \to \mathrm{Out}(E)$ is
  finite; thus the kernel $K = \ker(\bar\rho)$ is of finite index in
  $\langle f_1,\ldots,f_k \rangle$. Let $X= \{k_1,\ldots,k_m\}$ be a
  free generating set for $K$. On one hand the subgroup
  $H' = \langle E,k_1,\ldots,k_m \rangle \cong E \rtimes_{\rho|_K}
  K$
  is of finite index in $\hat{F_k}$, and therefore in $H$. On the other
  hand, since $K = \ker(\bar\rho)$, for each $k_i\in X$ there exists
  some $e_i \in E$ such that for all $e \in E$,
  $$
  k_i e k_i^{-1} = e_i e e_i^{-1}. 
  $$
  It follows that
  $$
  H' = \langle E, k_1e_1^{-1},\ldots,  k_me_m^{-1}\rangle \cong
  E\times F,
  $$ where $F \cong K$ is a free group of rank $m$.
\end{proof}

For the proof to work, it is essential that the isomorphism
$\varphi:E\to E$ in the HNN extension extends to an isomorphism
$\varphi:G \to G$. If this is not the case, then the identity
(\ref{e:count-outer}) need not hold, which in turn prevents 
rewriting into normal form. In particular the image $\bar{\rho_H}(H)$
could be infinite, which makes it impossible to virtually untwist the
semidirect product in Step 5. 

We point out that the class $\mathcal{D}$ is closed under taking quotients by
finite normal subgroups.  If $G$ has a subgroup $S$ of finite index which is a
subgroup of a direct product of free groups then $S$ is torsion-free.  So
$N \cap S = \{1\}$ and $S$ is embedded in the quotient $G/S$  and again has finite index.

In conclusion, if we start with a virtually free group, none of the known closure properties of the class  $\PG$ which we have discussed take us outside the class $\mathcal{D}$.  This seems fairly strong evidence in favor of Brough's Conjecture.\\

\noindent
{\bf Acknowledgments.} We wish to thank Ralph Strebel for very helpful
discussions and his interest in our work as well as for some simplifications of an earlier version
of the proof of Theorem \ref{t:HNN-closed}.  We also express our gratitude to the two referees for their  careful reading of our manuscript and their valuable comments.


\end{document}